\theoremstyle{plain}
\newtheorem{theorem}{Theorem}
\newtheorem{proposition}{Proposition}[section]
\newtheorem{definition}{Definition}[section]
\newtheorem{lemma}{Lemma}[section]
\newtheorem{claim}{Claim}
\newtheorem{case}{Case}
 \renewenvironment{proof}{
 \noindent{\bf Proof.}\rm} {\mbox{}\hfill\rule{0.5em}{0.809em}\par}
\begin{document}
\date{}
\title{{Star-critical Gallai-Ramsey numbers of graphs} \footnote{This work is supported
by the Science and Technology Program of Guangzhou, China(No.202002030183) and by the Natural Science Foundation of Qinghai, China (No.2020-ZJ-924).
 Correspondence should be addressed to Yan
Liu(e-mail:liuyan@scnu.edu.cn)}
}
\author{\small Xueli Su, Yan Liu\\
\small School of Mathematical Sciences, South China Normal University,\\
 \small Guangzhou, 510631, P.R. China}

\maketitle

\setcounter{theorem}{0}

\begin{abstract}
The Gallai-Ramsey number $gr_{k}(K_{3}: H_{1}, H_{2}, \cdots, H_{k})$ is the smallest integer $n$ such that every $k$-edge-colored $K_{n}$ contains either a rainbow $K_3$ or a monochromatic $H_{i}$ in color $i$ for some $i\in [k]$.
We find the largest star that can be removed from $K_n$ such that the underlying graph is still forced to have a rainbow $K_3$ or a monochromatic $H_{i}$ in color $i$ for some $i\in [k]$.
Thus, we define the star-critical Gallai-Ramsey number $gr_{k}^{*}(K_3: H_{1}, H_{2}, \cdots, H_{k})$ as the smallest integer $s$ such that every $k$-edge-colored $K_{n}-K_{1, n-1-s}$ contains either a rainbow $K_3$ or a monochromatic $H_{i}$ in color $i$ for some $i\in [k]$.
When $H=H_{1}=\cdots=H_{k}$, we simply denote $gr_{k}^{*}(K_{3}: H_{1}, H_{2}, \cdots, H_{k})$ by $gr_{k}^{*}(K_{3}: H)$.
We determine the star-critical Gallai-Ramsey numbers for complete graphs and some small graphs. Furthermore, we show that $gr_{k}^{*}(K_3: H)$ is exponential in $k$ if $H$ is not bipartite, linear in $k$ if $H$ is bipartite but not a star and constant (not depending on $k$) if $H$ is a star.

\noindent {\bf Key words:} Gallai-Ramsey number, critical graph, star-critical.
\end{abstract}

\vspace{4mm}
\section{Introduction}

All graphs considered in this paper are finite, simple and undirected. For a graph $G$, we use $|G|$ to denote the number of vertices of $G$, say the \emph{order} of $G$.
The complete graph of order $n$ is denoted by $K_{n}$ and the star graph of order $n$ is denoted by $K_{1, n-1}$.
For a subset $S$ of $V(G)$, let $G[S]$ be the subgraph of $G$ induced by $S$.
For two disjoint subsets $A$ and $B$ of $V(G)$, $E(A, B)=\{ab\in E(G) ~|~ a\in A, b\in B\}$.
Let $G$ be a graph and $H$ a subgraph of $G$. The graph obtained from $G$ by deleting all edges of $H$ is denoted by $G-H$.
$K_{n-1}\sqcup K_{1, s}$ is a graph obtained from $K_{n-1}$ by adding a new vertex $v$ and adding $s$ edges which join $v$ to $s$ vertices of $K_{n-1}$.
For any positive integer $k$, we write $[k]$ for the set $\{1, 2, \cdots, k\}$.
An edge-colored graph is called \emph{monochromatic} if all edges are colored by the same color and \emph{rainbow} if no two edges are colored by the same color.
A \emph{blow-up} of an edge-colored graph $G$ on a graph $H$ is a new graph obtained from $G$ by replacing each vertex of $G$ with $H$ and replacing each edge $e$ of $G$ with a monochromatic complete bipartite graph $(V(H), V(H))$ in the same color with $e$.

The \emph{Ramsey number} $R(G, H)$ is the smallest integer $n$ such that every red-blue edge-colored $K_n$ contains either a red $G$ or a blue $H$.
When $G=H$, we simply denote $R(G, H)$ by $R_{2}(G)$.
For more information on Ramsey number, we refer the readers to a dynamic survey on Ramsey number in \cite{R}.
The definition of Ramsey number implies that there exists a \emph{critical graph}, that is, a red-blue edge-colored $K_{n-1}$ contains neither a red $G$ nor a blue $H$.
It is significant to find a smallest integer $s$ such that for any critical graph $K_{n-1}$, every red-blue edge-colored $K_{n-1}\sqcup K_{1, s}$ contains either a red $G$ or a blue $H$.
To study this, Hook and Isaak \cite{JG} introduced the definition of the \emph{star-critical Ramsey number} $r_{*}(G, H)$. $r_{*}(G, H)$ is the smallest integer $s$ such that every red-blue edge-colored $K_{n-1}\sqcup K_{1, s}$ contains either a red $G$ or a blue $H$.
Then it is clear that $r_{*}(G, H)$ is the smallest integer $s$ such that every red-blue edge-colored $K_{n-1}\sqcup K_{1, s}$ contains either a red $G$ or a blue $H$ for any critical graph $K_{n-1}$.
When $G=H$, we simply denote $r_{*}(G, H)$ by $r_{*}(G)$.
Star-critical Ramsey number of a graph is closely related to its upper size Ramsey number and lower size Ramsey number\cite{JG}.
In \cite{ZBC}, Zhang et al. defined the \emph{Ramsey-full} graph pair $(G, H)$.
A graph pair $(G, H)$ is called Ramsey-full if there exists a red-blue edge-colored graph $K_{n}-e$ contains neither a red $G$ nor a blue $H$, where $n=R(G, H)$.
So $(G, H)$ is Ramsey-full if and only if $r_{*}(G, H)=n-1$.
If $(G, H)$ is Ramsey-full and $G=H$, then we say that $H$ is Ramsey-full.
Hook and Isaak \cite{J} proved that the complete graph pair $(K_m, K_n)$ is Ramsey-full.

Given a positive integer $k$ and graphs $H_{1}, H_{2}, \cdots, H_{k}$, the \emph{Gallai-Ramsey number} $gr_{k}(K_{3}: H_{1}, H_{2}, \cdots, H_{k})$ is the smallest integer $n$ such that every $k$-edge-colored $K_{n}$ contains either a rainbow $K_3$ or a monochromatic $H_{i}$ in color $i$ for some $i\in [k]$.
Clearly, $gr_{2}(K_{3}: H_{1}, H_{2})=R(H_{1}, H_{2})$.
When $H=H_{1}=\cdots=H_{k}$, we simply denote $gr_{k}(K_{3}: H_{1}, H_{2}, \cdots, H_{k})$ by $gr_{k}(K_{3}: H)$.
More information on Gallai-Ramsey number can be found in \cite{FMC, CP}.
Let $n=gr_{k}(K_3: H_{1}, H_{2}, \cdots, H_{k})$.
The definition of Gallai-Ramsey number implies that there exists a \emph{critical graph}, that is, a $k$-edge-colored $K_{n-1}$ contains neither a rainbow $K_3$ nor a monochromatic $H_{i}$ for any $i\in [k]$.
In this paper, we define the \emph{star-critical Gallai-Ramsey number} $gr_{k}^{*}(K_3: H_{1}, H_{2}, \cdots, H_{k})$ to be the smallest integer $s$ such that every $k$-edge-colored graph $K_{n-1}\sqcup K_{1, s}$ contains either a rainbow $K_3$ or a monochromatic $H_{i}$ in color $i$ for some $i\in [k]$.
Then it is clear that $gr_{k}^{*}(K_3: H_{1}, H_{2}, \cdots, H_{k})$ is the the smallest integer $s$ such that for any critical graph $K_{n-1}$, every $k$-edge-colored graph $K_{n-1}\sqcup K_{1, s}$ contains either a rainbow $K_3$ or a monochromatic $H_{i}$ in color $i$ for some $i\in [k]$.
Clearly, $gr_{k}^{*}(K_3: H_{1}, H_{2}, \cdots, H_{k})\leq n-1$ and $gr_{2}^{*}(K_{3}: H_{1}, H_{2})=r_{*}(H_{1}, H_{2})$. 
When $H=H_{1}=\cdots=H_{k}$, we simply denote $gr_{k}^{*}(K_{3}: H_{1}, H_{2}, \cdots, H_{k})$ by $gr_{k}^{*}(K_{3}: H)$.
$(H_{1}, H_{2}, \cdots, H_{k})$ is called \emph{Gallai-Ramsey-full} if there exists a $k$-edge-colored graph $K_{n}-e$ contains neither a rainbow $K_3$ nor a monochromatic $H_{i}$ for any $i\in[k]$.
So $(H_{1}, H_{2}, \cdots, H_{k})$ is Gallai-Ramsey-full if and only if $gr_{k}^{*}(K_3: H_{1}, \cdots, H_{k})=n-1$.
If $(H_{1}, H_{2}, \cdots, H_{k})$ is Gallai-Ramsey-full and $H=H_{1}=\cdots=H_{k}$, then we say that $H$ is Gallai-Ramsey-full.
In this paper, we investigate the star-critical Gallai-Ramsey numbers for some graphs.
In order to study the star-critical Gallai-Ramsey numbers of a graph $H$, we first characterize the critical graphs on $H$ and then use the critical graphs to find its star-critical Gallai-Ramsey number.
In Section 3, we obtain the star-critical Gallai-Ramsey numbers $gr_{k}^{*}(K_{3}: K_{p_1}, K_{p_2}, \cdots, K_{p_k})$ and $gr_{k}^{*}(K_3: C_4)$.
Thus we find that $(K_{p_1}, K_{p_2}, \cdots, K_{p_k})$ and $C_4$ are Gallai-Ramsey-full.
In Section 4, we get the star-critical Gallai-Ramsey numbers of $P_4$ and $K_{1, m}$.
Thus we find that $P_4$ and $K_{1, m}$ are not Gallai-Ramsey-full.
Finally, for general graphs $H$, we prove the general behavior of $gr_{k}^{*}(K_3: H)$ in Section 5.

\section{Preliminary}

In this section, we list some useful lemmas.
\begin{lemma}{\upshape \cite{Gallai, GyarfasSimonyi, CameronEdmonds}}\label{Lem:G-Part}
For any rainbow triangle free edge-colored complete graph $G$, there exists a partition of $V(G)$ into at least two parts such that there are at most two colors on the edges between the parts and only one color on the edges between each pair of parts. The partition is called a Gallai-partition.
\end{lemma}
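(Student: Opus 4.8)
\medskip
\noindent\textbf{Proof proposal for Lemma~\ref{Lem:G-Part}.}
The plan is to prove this classical result of Gallai by induction on $n=|V(G)|$. For $n\le 2$ the partition of $V(G)$ into singletons works, since at most one color appears between the two parts. So assume $n\ge 3$. If $G$ uses only one color the singleton partition again works, so we may assume at least two colors occur; note that for the singleton partition the requirement ``only one color on the edges between each pair of parts'' is automatic, so the whole point is to coarsen it enough that at most two colors appear between parts in total.

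First I would dispose of two reducible cases. Fix a color, say red, that occurs in $G$. \emph{Case A:} the spanning subgraph $G-G_{\mathrm{red}}$ of non-red edges is disconnected, say with components $D_1,\dots,D_m$, $m\ge 2$. Then every edge joining two different $D_i$ is red, so $\{D_1,\dots,D_m\}$ is already a Gallai-partition (with a single color between parts). \emph{Case B:} the red subgraph $G_{\mathrm{red}}$ is disconnected (this includes the case where some vertex is isolated in red), with components $C_1,\dots,C_q$; since red occurs, some $C_i$ has at least two vertices, so $2\le q\le n-1$. Here I would use the rainbow-triangle-free hypothesis to show that all edges between two components $C_i$ and $C_j$ receive the same color: fixing $x\in C_i$ and walking along a red path inside $C_j$, each triangle on $x$ and two consecutive vertices of the path has a red side and no rainbow, which forces all $x$--$C_j$ edges to share one color; then fixing a vertex of $C_j$ and walking along a red path inside $C_i$ forces all of $E(C_i,C_j)$ to be monochromatic. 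Contracting each $C_i$ to a single vertex yields an edge-colored complete graph $K_q$ that inherits the rainbow-triangle-free property and has $q<n$ vertices; by the induction hypothesis it has a Gallai-partition, which un-contracts to a Gallai-partition of $G$.

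The hard part will be the remaining case: at least three colors occur, and for \emph{every} color $c$ both $G_c$ and $G-G_c$ are connected spanning subgraphs of $G$. I expect this to be the main obstacle, and it is where the rainbow-triangle-free hypothesis must be exploited globally rather than triangle-by-triangle. One line of attack: since every $G_c$ spans, every vertex meets every color, so for a fixed vertex $v$ the color-neighborhoods $V_1,V_2,\dots$ of $v$ (with $V_i$ the set of color-$i$ neighbors of $v$) are all nonempty, and the absence of a rainbow triangle on $v$, a vertex of $V_i$ and a vertex of $V_j$ forces only colors $i$ and $j$ to appear between $V_i$ and $V_j$. Feeding the connectivity of each color class $G_c$ into this structure should force a rainbow triangle, i.e.\ show that this configuration cannot occur at all; equivalently, one runs the whole argument on a minimal counterexample and derives the contradiction there. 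Once this last case is ruled out the induction closes and a Gallai-partition always exists.
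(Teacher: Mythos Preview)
The paper does not prove Lemma~\ref{Lem:G-Part}; it is quoted from \cite{Gallai,GyarfasSimonyi,CameronEdmonds} and used as a black box. So there is no in-paper proof to compare against---you are supplying an argument where the authors supply only a citation.

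On the merits, your Cases~A and~B are correct and standard: if some non-$c$ graph is disconnected the components already form a Gallai-partition, and if some colour class $G_c$ is disconnected then the rainbow-triangle-free condition does force all $C_i$--$C_j$ edges to be monochromatic, after which contraction and induction finish. The genuine gap is the ``hard case'': at least three colours and, for \emph{every} colour $c$, both $G_c$ and $G-G_c$ connected. You explicitly leave this open (``should force a rainbow triangle'', ``I expect this to be the main obstacle''), but this is precisely where the content of Gallai's theorem lives---Cases~A and~B are the soft reductions. In fact, once the lemma is known, any Gallai-partition with cross-colours $\{a,b\}$ forces every third colour class to live inside the parts and hence to be disconnected; so showing the hard case is impossible is essentially equivalent to the theorem itself, not a corollary of your setup. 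Your observation that between $V_i$ and $V_j$ only colours $i,j$ occur is the right starting point, but you still need an actual extremal or counting argument (e.g.\ choose a vertex meeting the fewest colours, or run a minimal-counterexample argument on the number of colours and show that the connectivity constraints on all $G_c$ simultaneously cannot be met). As written, the proof is incomplete at its crucial step.
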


\begin{lemma}\label{Lem:qneq3}
Let $G$ be a rainbow triangle free edge-colored complete graph and $(V_1, V_2, \ldots, V_q)$ a Gallai-partition of $V(G)$ with the smallest number of parts.
If $q>2$, then for each part $V_i$, there are exactly two colors on the edges in $E(V_i, V(G)-V_i)$ for $i\in [q]$. Thus $q\neq3$.
\end{lemma}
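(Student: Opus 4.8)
The plan is to argue by contradiction, exploiting the minimality of the number of parts. Fix a Gallai partition $(V_1,\dots,V_q)$ of $V(G)$ with $q$ as small as possible, and assume $q>2$. By Lemma~\ref{Lem:G-Part} at most two colors, say $1$ and $2$, occur on all the edges running between parts, and exactly one of them occurs between any prescribed pair of parts. In particular, for every $i$ the set of colors appearing on $E(V_i,V(G)-V_i)$ is a nonempty subset of $\{1,2\}$, so it has size $1$ or $2$; the whole of the first assertion reduces to excluding size $1$.

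For that assertion, suppose some part $V_i$ has all of its outgoing edges in one color, say color $1$. Then I would observe that $(V_i,\,V(G)-V_i)$ is itself a Gallai partition: it consists of two parts, both nonempty (the second because $q>2$ leaves at least one further part), and the unique pair of parts is joined only by color-$1$ edges. This Gallai partition has $2<q$ parts, contradicting the choice of $(V_1,\dots,V_q)$. Hence each $V_i$ sees at least two colors on $E(V_i,V(G)-V_i)$, and together with the previous paragraph this is exactly two colors.

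For the final claim, assume $q=3$. Each $V_i$ then has external edges only to the other two parts, so let $a,b,c\in\{1,2\}$ be the colors on $E(V_1,V_2)$, $E(V_1,V_3)$, and $E(V_2,V_3)$ respectively. Applying the first assertion to $V_1$, $V_2$, $V_3$ in turn forces $a\neq b$, $a\neq c$, and $b\neq c$, i.e.\ $a,b,c$ are pairwise distinct — impossible inside a two-element set. Therefore $q\neq 3$.

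No serious obstacle is anticipated: the heart of the argument is a one-line minimality contradiction. The only points requiring a little care are confirming that the two-part family produced in the middle step genuinely meets the definition of a Gallai partition (two nonempty parts, a single color between the only pair), and making explicit that ``smallest number of parts'' ranges over all Gallai partitions of $V(G)$, so that exhibiting one with fewer parts is a legitimate contradiction.
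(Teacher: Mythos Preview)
Your proof is correct and follows essentially the same approach as the paper: a minimality contradiction obtained by merging all parts other than $V_i$ into one, yielding a two-part Gallai partition. The only difference is that you spell out the $q\neq 3$ deduction explicitly (three pairwise distinct colors cannot live in $\{1,2\}$), whereas the paper simply asserts ``It follows that $q\neq 3$.''
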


\begin{proof}
By Lemma~\ref{Lem:G-Part}, suppose, to the contrary, that there exists one part (say $V_{1}$) such that all edges joining $V_{1}$ to other parts are colored by the same color. Then we can find a new Gallai-partition with two parts $(V_{1}, V_{2}\bigcup\cdots \bigcup V_{q})$, which contradicts with that $q$ is smallest.
It follows that $q\neq 3$.
\end{proof}

\begin{lemma}{\upshape \cite{R}}\label{Lem:2P4}
$
R_2(C_{4})=R_2(K_{1, 3})=6, R_2(P_{4})=5.
$
\end{lemma}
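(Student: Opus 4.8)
The plan is to prove each equality by matching a lower-bound construction on $K_{n-1}$ with an upper-bound counting argument on $K_n$. For the two lower bounds at $n-1=5$, I would use the self-complementary factorization $K_5 = C_5 \cup \overline{C_5}$: color the edges of one Hamiltonian $5$-cycle red and the edges of the complementary $5$-cycle blue. Each color class is $2$-regular, so no vertex meets three edges of one color, which rules out a monochromatic $K_{1,3}$; and each color class is a copy of $C_5$, which is triangle-free with only five edges and hence contains no $C_4$. This gives simultaneously $R_2(K_{1,3}) \ge 6$ and $R_2(C_4) \ge 6$. For $P_4$, I would color $K_4$ by putting a red triangle on three of the vertices and coloring the three edges at the fourth vertex blue: the red graph is $K_3$, which has only three vertices, and the blue graph is the star $K_{1,3}$, whose longest path is $P_3$, so neither color contains a $P_4$; hence $R_2(P_4) \ge 5$.

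For the upper bounds I would repeatedly use that in any $2$-edge-coloring of $K_n$ some color class has at least $\lceil \binom{n}{2}/2 \rceil$ edges. The bound $R_2(K_{1,3}) \le 6$ is then immediate: in $K_6$ every vertex has degree $5$, hence meets at least three edges of one color, producing a monochromatic $K_{1,3}$. For $R_2(P_4) \le 5$, I would first note that a graph containing no subgraph $P_4$ has every connected component equal to a star or a triangle (a spanning tree of a component is $P_4$-free, hence a star, and an extra edge among the leaves would create a $P_4$ once the component has at least four vertices), so on five vertices such a graph has at most four edges; since some color class of $K_5$ has at least five edges, that class contains a $P_4$.

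The hard part will be $R_2(C_4) \le 6$. The clean route is to invoke the small extremal value $\mathrm{ex}(6; C_4) = 7$: a $C_4$-free graph on six vertices has at most seven edges, while in $K_6$ some color class has at least $\lceil 15/2 \rceil = 8 > 7$ edges and hence contains a $C_4$. If a self-contained argument is wanted instead, I would use the standard common-neighbor count $\sum_v \binom{d(v)}{2} \le \binom{6}{2}$ valid in any $C_4$-free graph on six vertices: combined with convexity this forces the larger color class to have exactly eight edges and one of only three degree sequences, and each remaining bounded case is then eliminated by inspection (using $\mathrm{ex}(4; C_4) = 4$ inside the neighborhoods of the vertices of largest degree). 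Either way, putting the two bounds together yields $R_2(C_4) = R_2(K_{1,3}) = 6$ and $R_2(P_4) = 5$.
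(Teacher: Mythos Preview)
The paper does not prove this lemma at all; it is quoted from Radziszowski's dynamic survey without argument, as is standard for these classical small Ramsey values. So there is no proof in the paper for yours to be compared against.

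Your plan is correct. The lower-bound colorings (the two-$C_5$ factorization of $K_5$ for both $C_4$ and $K_{1,3}$, and the red $K_3$ / blue $K_{1,3}$ coloring of $K_4$ for $P_4$) are the canonical ones, and the pigeonhole and structural upper bounds for $K_{1,3}$ and $P_4$ go through exactly as you describe. For $R_2(C_4)\le 6$, invoking $\mathrm{ex}(6;C_4)=7$ against the $\lceil 15/2\rceil=8$ edges in the majority color is the clean route. Your self-contained alternative via $\sum_v\binom{d(v)}{2}\le\binom{6}{2}=15$ does correctly cut an $8$-edge $C_4$-free candidate down to the three degree sequences $(3,3,3,3,2,2)$, $(4,3,3,2,2,2)$, $(3,3,3,3,3,1)$. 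The only quibble is the parenthetical hint ``using $\mathrm{ex}(4;C_4)=4$ inside the neighborhoods'': that bound is not the operative one here. What actually kills these cases is that in a $C_4$-free graph any two neighbors of a fixed vertex $v$ can have no second common neighbor, so the graph induced on $N(v)$ is a matching and, for a degree-$4$ vertex, the lone non-neighbor can touch $N(v)$ at most once; this already disposes of the latter two sequences, and a two-line check handles $(3,3,3,3,2,2)$. With that adjustment the inspection is routine.
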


\begin{lemma}{\upshape \cite{RRMC}}\label{Lem:C4}
For any positive integer $k$ with that $k\geq 2$,
$
gr_k(K_{3}: C_{4})=k+4.
$
\end{lemma}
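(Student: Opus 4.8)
The plan is to prove the two inequalities $gr_k(K_3:C_4)\ge k+4$ and $gr_k(K_3:C_4)\le k+4$ separately, the first by an explicit iterated construction and the second by a Gallai-partition ``peeling'' argument. For the lower bound I would start from a $2$-edge-coloring of $K_5$ whose two color classes are Hamiltonian $5$-cycles (the decomposition $K_5=C_5\cup C_5$ exists); this coloring has no rainbow $K_3$, as it uses only two colors, and no monochromatic $C_4$, as $C_5$ is $C_4$-free. Then, for $j=3,4,\dots,k$ in turn, I add one new vertex and color all edges from it to the current graph with color $j$: each new color class is a star, hence $C_4$-free, the old color classes are untouched, and every newly created triangle has two edges of the same new color, so no rainbow $K_3$ ever appears. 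After $k-2$ steps this yields a $k$-edge-colored $K_{k+3}$ with neither a rainbow $K_3$ nor a monochromatic $C_4$, proving $gr_k(K_3:C_4)\ge k+4$.

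For the upper bound, suppose $G$ is a $k$-edge-colored $K_n$ with no rainbow $K_3$ and no monochromatic $C_4$; the goal is $n\le k+3$. Fix a Gallai partition of $V(G)$ with the fewest parts $q$ (Lemma~\ref{Lem:G-Part}); by Lemma~\ref{Lem:qneq3}, $q\ne 3$. Two structural facts drive the argument. First, if $q\ge 4$, then each vertex of the reduced $2$-colored $K_q$ has degree at least $3$ and hence two incident edges of a common color, so a part of size at least $2$ would, with one vertex taken from each of those two neighbouring parts, complete a monochromatic $C_4$; thus every part is a singleton, $G$ is itself a $2$-colored complete graph, and $q\le R_2(C_4)-1=5$ by Lemma~\ref{Lem:2P4}. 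Second, if $q=2$ with parts $A,B$ and all edges between $A$ and $B$ of one color $c$, then $|A|\ge 2$ and $|B|\ge 2$ cannot both hold (else a monochromatic $C_4$), so one part is a single vertex $w$; moreover inside the other part the color $c$ induces a matching, because a color-$c$ path on three vertices closed up through $w$ would be a monochromatic $C_4$.

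Using the second fact I would peel vertices off one at a time: while the current complete subgraph $G_i$ has at least three vertices and a fewest-parts Gallai partition with $q=2$ (the case $q=3$ being excluded each time by Lemma~\ref{Lem:qneq3}), delete the singleton part $w_{i+1}$, record its attachment color $c_{i+1}$, and continue with $G_{i+1}=G_i-w_{i+1}$. Since each peel step acts on a graph of order $\ge 3$, the leftover always has at least two vertices, and this lets me show the recorded colors are pairwise distinct: if $c_{j+1}=c_i$ with $i\le j$, then $w_i$ is joined to all of $G_i\supseteq G_{j+1}$ in color $c_i$ and $w_{j+1}$ is joined to all of $G_{j+1}$ in color $c_i$, so any two vertices of $G_{j+1}$ together with $w_i$ and $w_{j+1}$ give a monochromatic $C_4$. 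Hence the number $t$ of peeled vertices satisfies $t\le k$, and each $c_i$ still induces a matching in the terminal graph $G_t$. The peeling stops either with $|G_t|=2$, whence $n=t+2\le k+2$, or with $|G_t|\ge 3$ and a fewest-parts Gallai partition having $q\ge 4$, in which case the first fact makes $G_t$ a $2$-colored complete graph with $|G_t|\le 5$. A short case analysis of $|G_t|\in\{3,4,5\}$ — using that a color of $G_t$ lying among $c_1,\dots,c_t$ must be a matching in $G_t$, which for $|G_t|\ge 4$ already contradicts $C_4$-freeness of the complementary color class, and that for $|G_t|=3$ both colors of $G_t$ cannot be among the $c_i$ — shows that enough of the $k$ colors remain unspent by the peeled vertices to force $n=t+|G_t|\le k+3$ in every case. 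This establishes $gr_k(K_3:C_4)\le k+4$, and with the construction above, equality; the extremal case $n=k+3$ (all colours distinct on $k-2$ peeled vertices plus a $2$-colored $K_5$) exactly reproduces the construction.

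I expect the endgame to be the main obstacle: reconciling, at the terminal graph $G_t$, the number of distinct colors already spent on the peeled vertices, the matching restriction those colors inherit inside $G_t$, and the bound $|G_t|\le 5$, so as to force $t+|G_t|\le k+3$. Everything else — the two structural facts, the pairwise distinctness of the attachment colors, and the matching property — is a straightforward application of ``avoid a monochromatic $C_4$''.
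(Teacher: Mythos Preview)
The paper does not supply a proof of this lemma; it is quoted from \cite{RRMC} and used as a black box, so there is no in-paper argument to compare against. Your proof is correct and self-contained. The lower-bound construction you describe is exactly the coloring the paper later records as $G^{k}_{n-1}$ in Definition~\ref{Def:C4} (there for the purpose of computing $gr_k^{*}(K_3:C_4)$), so you are in line with the paper's viewpoint on that half.

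For the upper bound your peeling argument goes through as written; one cosmetic simplification is that the terminal case $|G_t|=3$ never actually arises: a non-rainbow triangle always has a vertex whose two incident edges share a color, hence admits a $2$-part Gallai partition with a singleton and would be peeled once more. Thus the endgame reduces to $|G_t|\in\{2,4,5\}$. In the cases $|G_t|\in\{4,5\}$ your matching observation is decisive and can be stated uniformly: in a $2$-edge-colored $K_4$ or $K_5$ with no monochromatic $C_4$, neither color class can be a matching, because the complement in $K_4$ or $K_5$ of any matching already contains a $C_4$. Hence both colors of $G_t$ lie outside $\{c_1,\dots,c_t\}$, giving $t\le k-2$ and $n=t+|G_t|\le k+3$.
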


\begin{lemma}{\upshape \cite{RRMC}}\label{Lem:P4}
For any positive integer $k$,
$
gr_k(K_{3}: P_{4})=k+3.
$
\end{lemma}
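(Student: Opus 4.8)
The plan is to establish the two inequalities $gr_k(K_3:P_4)\ge k+3$ and $gr_k(K_3:P_4)\le k+3$ separately.

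For the lower bound I would exhibit an explicit $k$-edge-coloring of $K_{k+2}$ with no rainbow triangle and no monochromatic $P_4$. On vertices $v_1,\dots,v_{k+2}$, color the edge $v_iv_j$ (for $i<j$) with color $\min\{i,k\}$. Then color class $i$ with $i<k$ is the star centered at $v_i$, and color class $k$ is the triangle on $\{v_k,v_{k+1},v_{k+2}\}$, so no color class contains a $P_4$; and in any triangle $v_av_bv_c$ with $a<b<c$ the two edges incident to $v_a$ share the color $\min\{a,k\}$, so there is no rainbow triangle. This gives $gr_k(K_3:P_4)>k+2$.

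For the upper bound I would induct on $k$, with $k=1$ (trivially $K_4\supseteq P_4$) and $k=2$ ($R_2(P_4)=5$ by Lemma~\ref{Lem:2P4}) as base cases. For $k\ge3$, given a $k$-edge-colored $K_{k+3}$ with no rainbow triangle, take a Gallai-partition $(V_1,\dots,V_q)$ with $q$ minimum (Lemma~\ref{Lem:G-Part}); its reduced graph $R$ on $q$ vertices uses at most two colors, say $1$ and $2$, and $q\ne3$ by Lemma~\ref{Lem:qneq3}. If $q\ge5$, then $R$ contains a $2$-colored $K_5$, hence a monochromatic $P_4$ by $R_2(P_4)=5$, and blowing it up gives a monochromatic $P_4$ in $G$. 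If $q=2$ with all $V_1$–$V_2$ edges in color $1$: if both parts have at least two vertices a color-$1$ $K_{2,2}$ contains a $P_4$, so $|V_2|=1$; then any color-$1$ edge inside $V_1$ together with the apex of $V_2$ and a third vertex of $V_1$ (available since $|V_1|=k+2\ge3$) gives a color-$1$ $P_4$, so $G[V_1]$ avoids color $1$, is a $K_{(k-1)+3}$ colored with the remaining $k-1$ colors and with no rainbow triangle, and the induction hypothesis supplies a monochromatic $P_4$.

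The case $q=4$ is where I expect the real work, and it is the main obstacle. The key sublemma is that a $2$-edge-colored $K_4$ with no monochromatic $P_4$ must, up to renaming colors and vertices, be a star $K_{1,3}$ in one color plus the complementary triangle $K_3$ in the other (each color class has exactly three edges and must be $P_4$-free, and the only way to split the six edges of $K_4$ into two such graphs is star plus opposite triangle). So assume the $V_1$–$V_i$ edges are color $1$ for $i\in\{2,3,4\}$ and the $V_i$–$V_j$ edges are color $2$ for $2\le i<j\le4$. If $|V_1|\ge2$, two vertices of $V_1$ together with one vertex each from $V_2$ and $V_3$ (in the order $V_2,V_1,V_3,V_1$) form a color-$1$ $P_4$; otherwise $|V_1|=1$, so $|V_2|+|V_3|+|V_4|=k+2\ge5$, some part among $V_2,V_3,V_4$ has two vertices, and a color-$2$ $P_4$ is found inside the complete $3$-partite graph on $V_2,V_3,V_4$. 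Either way $G$ contains a monochromatic $P_4$, closing the induction. Beyond isolating and proving the $P_4$-free classification of $2$-colored $K_4$ and checking this handful of size sub-cases, the argument is a routine Gallai-partition induction leaning on $R_2(P_4)=5$.
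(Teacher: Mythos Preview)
Your proof is correct. Note, though, that the paper does not prove Lemma~\ref{Lem:P4}; it is quoted from \cite{RRMC} and used as a black box, so there is no in-paper argument to compare against directly.

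Your Gallai-partition induction is nonetheless very much in the spirit of the paper's own machinery in Proposition~\ref{Pro:P4}, which classifies the critical $K_{k+2}$ colorings for $P_4$ by the same case split on the number $q$ of parts (using $R_2(P_4)=5$ to force $q\le 4$, and handling $q=2$ by stripping one color and inducting). One remark on your $q=4$ case, which you call ``the main obstacle'': it is actually vacuous once you invoke Lemma~\ref{Lem:qneq3} fully. You correctly show that the only $P_4$-free $2$-coloring of $K_4$ is a monochromatic star $K_{1,3}$ together with its complementary triangle; but then the star's center is a part all of whose outgoing edges carry a single color, contradicting Lemma~\ref{Lem:qneq3} for a minimal partition with $q>2$. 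Hence a minimal Gallai partition with $q=4$ already forces a monochromatic $P_4$ in the reduced graph, and your direct blow-up analysis, while valid, can be skipped. (The paper exploits exactly this observation in Claim~\ref{Cla:q=2} of Proposition~\ref{Pro:P4}, ruling out $q=4$ for $k\ge 3$ in one line.)
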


\begin{lemma}{\upshape \cite{GyarfasSimonyi}}\label{Lem:star}
For any $m\geq 3$ and $k\geq 3$,
$$
gr_k(K_{3}: K_{1, m})= \begin{cases}
\frac{5m-6}{2},  & \text{if $m$ is even,}\\
\frac{5m-3}{2},  & \text{if $m$ is odd.}
\end{cases}
$$
\end{lemma}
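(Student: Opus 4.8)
The plan is to prove the two inequalities separately: a colouring construction for the lower bound, and a Gallai-partition analysis for the upper bound.

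\emph{Lower bound.} For each $k\ge 3$ I would blow up the two-colouring of $K_5$ whose two colour classes are edge-disjoint $5$-cycles: replace vertex $i$ by a set $W_i$, colour all edges inside $W_i$ with colour $3$, colour the edges between $W_i$ and $W_j$ with the colour of $ij$ in $K_5$, and leave colours $4,\dots,k$ unused. This is a Gallai colouring: a triangle inside some $W_i$ is monochromatic, a triangle with exactly two vertices in some $W_i$ repeats the colour of its outside edge, and a triangle meeting three parts uses only the two colours of $K_5$. A vertex of $W_i$ has colour-$3$ degree $|W_i|-1$ and, for $c\in\{1,2\}$, colour-$c$ degree equal to the sum of the two $|W_j|$ with $ij$ of colour $c$. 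Choosing $|W_i|=\tfrac{m-1}{2}$ for all $i$ when $m$ is odd, and one part of size $\tfrac m2$ together with four parts of size $\tfrac m2-1$ when $m$ is even, one checks that every such degree is at most $m-1$, so there is no monochromatic $K_{1,m}$; the orders are $\tfrac{5m-5}{2}$ and $\tfrac{5m-8}{2}$, one less than the claimed value in each parity.

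\emph{Upper bound.} Let $n$ be the claimed value and suppose a $k$-colouring of $K_n$ has no rainbow triangle and no monochromatic $K_{1,m}$. Take a Gallai partition $(V_1,\dots,V_q)$ with $q$ smallest; by Lemma~\ref{Lem:qneq3} either $q=2$ or $q\ge 4$, and for $q\ge 4$ the reduced $K_q$ uses exactly two colours, say $1$ and $2$. For each $i$ put $R_i=\sum_{j:\,ij\text{ colour }1}|V_j|$ and $B_i=\sum_{j:\,ij\text{ colour }2}|V_j|$; a vertex of $V_i$ has colour-$1$ degree at least $R_i$ and colour-$2$ degree at least $B_i$, so $R_i\le m-1$, $B_i\le m-1$, and $R_i+B_i=n-|V_i|$. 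If $q=2$ this already gives $n\le 2(m-1)$, far below $n$; if $q=4$, a finite check over the two-colourings of $K_4$ (each has a colour class containing a vertex of degree $3$, or is the self-complementary pair $P_4,P_4$, or the pair $2K_2,C_4$) exhibits two parts whose neighbourhoods in suitable colours partition $n$, again forcing $n\le 2(m-1)$.

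\emph{The main case $q\ge 5$.} Summing $R_i\le m-1$ and $B_i\le m-1$ over all $i$ and using $\sum_i(R_i+B_i)=(q-1)n$ gives $(q-1)n\le 2q(m-1)$, hence $n\le\tfrac{2q}{q-1}(m-1)\le\tfrac52(m-1)$ because $\tfrac{2q}{q-1}$ is decreasing in $q$. For $m$ odd this is $\tfrac{5m-5}{2}$, strictly below the claimed value, a contradiction. For $m$ even, $\tfrac52(m-1)$ is not an integer and the bound only gives $n\le\tfrac{5m-6}{2}$, which equals the claimed value, so I would sharpen it. Since $n-2(m-1)=\tfrac m2-1$, every part has size at least $\tfrac m2-1$; if some reduced vertex had three neighbours in one colour, those parts alone would sum to at least $3(\tfrac m2-1)>m-1$ for $m>4$, so for $m>4$ each colour class of the reduced graph is $2$-regular, the reduced $K_5$ is a red $C_5$ plus a blue $C_5$, and then every pair of parts satisfies $|V_i|+|V_j|\le m-1$ (each pair is the colour-$c$ neighbourhood of some reduced vertex). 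But the five part sizes exceed $5(\tfrac m2-1)$ by only $2$ in total, so two of them sum to at least $m$ --- a contradiction. The remaining small configurations ($q\ge 6$ when $m$ is even, handled by the same averaging bound for $m\ge 8$ and a direct degree count for $m=6$; and the single value $m=4$, where $gr_k(K_3:K_{1,4})=7$ is verified by hand) are routine. The genuinely delicate point --- and the source of the split between the two residues of $m$ --- is precisely this even-$m$ endgame, where the averaging bound falls a bounded amount short of the truth and one must use the rigidity of the extremal $C_5+C_5$ colouring; everything else is either immediate or a finite verification.
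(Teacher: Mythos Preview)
The paper does not prove this lemma: it is quoted verbatim from \cite{GyarfasSimonyi} and used as a black box, so there is no in-paper argument to compare against. What the paper \emph{does} prove is the stronger Proposition~\ref{Pro:star}, which classifies all critical colourings (for $m$ odd or $m\ge 12$ even), and the machinery there is exactly the Gallai-partition analysis you carry out: minimality forces $q\notin\{2,3,4\}$ and then $q=5$ with the reduced $K_5$ equal to two complementary $5$-cycles. So your route is the expected one and is essentially what Gy\'arf\'as--Simonyi do.

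Your argument is sound in outline. Two small points worth tightening. First, in the $q=4$ case your phrasing ``two parts whose neighbourhoods in suitable colours partition $n$'' is a little vague; the clean version (used implicitly in the paper's proof of Proposition~\ref{Pro:star}) is that after ruling out $2K_2+C_4$ by minimality of $q$, the reduced $K_4$ is the $P_4+P_4$ colouring, and then the two \emph{endpoints} of one monochromatic $P_4$ have disjoint colour-$c$ neighbourhoods covering all four parts, giving $n\le 2(m-1)$ directly. Second, for even $m$ your averaging bound $(q-1)n\le 2q(m-1)$ is tight at $q=5$ and at $q=6$, $m=6$ it gives only $n\le 12$, which is the claimed value; so ``routine'' is slightly optimistic there, and one really does need the degree-rigidity step (each part has size $\ge m/2-1$, hence no reduced vertex has three same-colour neighbours) for $q=6$ as well as $q=5$. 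None of this is a genuine gap --- the fix is the same idea you already used --- but it is where the argument needs a sentence more than you gave it.
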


\section{Gallai-Ramsey-full graphs}

First, we investigate the star-critical Gallai-Ramsey number for complete graphs.

\begin{theorem}\label{Thm:Kp}
For any positive integers $p_1$, $p_2$, $\ldots$, $p_k$ and $k$, $$gr_{k}^{*}(K_{3}: K_{p_1}, K_{p_2}, \cdots, K_{p_k})=gr_{k}(K_{3}: K_{p_1}, K_{p_2}, \cdots, K_{p_k})-1.$$
\end{theorem}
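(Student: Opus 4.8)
The plan is to prove the two inequalities separately. The bound $gr_{k}^{*}(K_{3}: K_{p_1}, \ldots, K_{p_k}) \le n-1$, where $n = gr_{k}(K_{3}: K_{p_1}, \ldots, K_{p_k})$, is immediate and was already observed in the introduction: $K_{n-1}\sqcup K_{1,n-1} \cong K_n$, so every $k$-edge-coloring of it contains a rainbow $K_3$ or a monochromatic $K_{p_i}$ in color $i$. Hence the real content is the reverse bound $gr_{k}^{*} \ge n-1$, which amounts to exhibiting one $k$-edge-colored $K_{n-1}\sqcup K_{1,n-2}$, with the underlying $K_{n-1}$ a critical graph, that contains neither a rainbow triangle nor a monochromatic $K_{p_i}$ in color $i$. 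Since $K_{n-1}\sqcup K_{1,n-2}\cong K_n-e$, proving this also yields that $(K_{p_1},\ldots,K_{p_k})$ is Gallai-Ramsey-full.

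For the construction I would use a twin (clone) of a vertex. By definition of $n$ there is a $k$-edge-colored $K_{n-1}$, call it $G$ with coloring $c$, having no rainbow triangle and no monochromatic $K_{p_i}$ in color $i$; this is the critical graph. I would then fix any vertex $u\in V(G)$, introduce a new vertex $v$, color each edge $vw$ (for $w\in V(G)\setminus\{u\}$) with the color $c(uw)$, and leave the edge $uv$ absent. The resulting graph is exactly $K_{n-1}\sqcup K_{1,n-2}$ with $v$ the added vertex missing $u$, and it remains to check the two forbidden configurations are absent. For rainbow triangles: a triangle either lies inside $G$, or has the form $vw_1w_2$ with $w_1,w_2\in V(G)\setminus\{u\}$, in which case its set of colors equals $\{c(uw_1),c(uw_2),c(w_1w_2)\}$, the color set of the triangle $uw_1w_2$ of $G$, which has size at most $2$. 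For a monochromatic $K_{p_i}$ in color $i$: it either lies inside $G$, or contains $v$, hence has the form $\{v\}\cup S$ with $|S|=p_i-1$, $S\subseteq V(G)\setminus\{u\}$ since $v\not\sim u$, $S$ monochromatic of color $i$, and $c(uw)=i$ for every $w\in S$ (because $vw$ has color $i$); but then $\{u\}\cup S$ is a monochromatic $K_{p_i}$ of color $i$ in $G$, a contradiction. Either way, $K_{n-1}\sqcup K_{1,n-2}$ is not forced to contain the target configurations, so $gr_{k}^{*}\ge n-1$, and combined with the trivial upper bound we get equality.

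The step I expect to carry all the weight — and the only place where completeness of the $K_{p_i}$ is genuinely used — is the pull-back of a monochromatic clique through $v$: because a clique forces $v$ to be joined to all its other vertices, the omitted neighbor $u$ is automatically excluded, so replacing $v$ by $u$ produces a monochromatic clique of the same size inside $G$. This is precisely what breaks for non-complete targets such as $P_4$ or $K_{1,m}$ (handled later in the paper), where a monochromatic copy through $v$ may legitimately use $u$ in the role of a non-neighbor of $v$, which is why those graphs turn out not to be Gallai-Ramsey-full. I anticipate no further obstacle: the verification that no rainbow triangle is created is essentially automatic from the twin construction and holds for any critical graph, and the remaining degenerate cases (very small $p_i$, or $n\le 2$) are trivial and can be dismissed in a line.
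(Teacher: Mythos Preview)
Your proposal is correct and follows exactly the same approach as the paper: take a critical $k$-coloring $f$ of $K_{n-1}$, fix a vertex $u$, add a new vertex $v$ adjacent to $V(K_{n-1})\setminus\{u\}$, and color each new edge $vw$ by $f(uw)$. The paper asserts without further argument that the resulting coloring of $K_{n-1}\sqcup K_{1,n-2}$ has no rainbow triangle and no monochromatic $K_{p_i}$ in color $i$; your explicit verifications (mapping a triangle $vw_1w_2$ to $uw_1w_2$, and a clique $\{v\}\cup S$ to $\{u\}\cup S$) fill in precisely that ``Clearly'' step.
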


\begin{proof}
Let $n=gr_{k}(K_{3}: K_{p_1}, K_{p_2}, \cdots, K_{p_k})$.
Clearly, $gr_{k}^{*}(K_{3}: K_{p_1}, K_{p_2}, \cdots, K_{p_k})\leq n-1$.
So we only prove that $gr_{k}^{*}(K_{3}: K_{p_1}, K_{p_2}, \cdots, K_{p_k})\geq n-1$.
Since $gr_k(K_3: K_{p_1}, K_{p_2}, \cdots, K_{p_k})=n$, there exists a $k$-edge-colored critical graph $K_{n-1}$ containing neither a rainbow triangle nor a monochromatic $K_{p_i}$ in color $i$ for any $i\in[k]$.
Let $f$ be the $k$-edge-coloring of the critical graph $K_{n-1}$ and $u\in V(K_{n-1})$.
We construct $K_{n-1}\sqcup K_{1, n-2}$ by adding the edge set $\{vw~|~w\in V(K_{n-1})-\{u\}\}$ to the critical graph $K_{n-1}$, where $v$ is the center vertex of $K_{1, n-2}$.
Let $g$ be a $k$-edge-coloring of $K_{n-1}\sqcup K_{1, n-2}$ such that $$
g(e)= \begin{cases}
f(e),  & \text{if $e\in E(K_{n-1})$,}\\
f(uw),  & \text{if $e=vw$ and $w\in V(K_{n-1})-\{u\}$.}
\end{cases}
$$
Clearly, the $k$-edge-colored $K_{n-1}\sqcup K_{1, n-2}$ by $g$ contains neither a rainbow triangle nor a monochromatic $K_{p_i}$ in color $i$ for any $i\in[k]$. Then $gr_{k}^{*}(K_{3}: K_{p_1}, K_{p_2}, \cdots, K_{p_k})\geq n-1$.
Therefore, $gr_{k}^{*}(K_{3}: K_{p_1}, K_{p_2}, \cdots, K_{p_k})=n-1$.
\end{proof}

By Theorem~\ref{Thm:Kp}, we know that $(K_{p_1}, K_{p_2}, \cdots, K_{p_k})$ is Gallai-Ramsey-full.

In the following, we determine the star-critical Gallai-Ramsey number $gr_{k}^{*}(K_{3}: C_{4})$.
By Lemma~\ref{Lem:C4}, $gr_{k}(K_3: C_4)=k+4\geq6$ for any $k\geq 2$.
First we construct a critical graph on $C_4$.

\begin{definition}\label{Def:C4}
Let $k\geq2$, $n=gr_k(K_{3}: C_{4})$, $V(K_{n-1})=\{v_1, v_2, \ldots, v_{n-1}\}$ and color set $[k]$.
The subgraph induced by $\{v_1, v_2, v_3, v_4, v_5\}$ consists of two edge-disjoint $C_5$, say $v_1v_2v_3v_4v_5v_1$ and $v_1v_4v_2v_5v_3v_1$.
Define a $k$-edge-coloring $f$ of $K_{n-1}$ as follows: (1) $f(e)=1$ if $e$ is an edge of $v_1v_2v_3v_4v_5v_1$ and $f(e)=2$ if $e$ is an edge of $v_1v_4v_2v_5v_3v_1$.
(2) For any $j\in \{6, \ldots, n-1\}$ and $i\in [j-1]$, $f(v_jv_i)=j-3$.
We denote this $k$-edge-colored $K_{n-1}$ by $G^{k}_{n-1}$.
\end{definition}

Clearly, graph $G^{k}_{n-1}$ in Definition~\ref{Def:C4} contains neither a rainbow triangle nor a monochromatic $C_4$.

\begin{theorem}\label{Thm:starC4}
For any positive integer $k$ with that $k\geq 2$,
$
gr_{k}^{*}(K_{3}: C_{4})=k+3.
$
\end{theorem}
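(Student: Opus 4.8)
The plan is to prove the two inequalities $gr_k^*(K_3:C_4)\ge k+3$ and $gr_k^*(K_3:C_4)\le k+3$ separately, where $n=gr_k(K_3:C_4)=k+4$ by Lemma~\ref{Lem:C4}, so we are trying to show $gr_k^*(K_3:C_4)=n-1$, i.e.\ that $C_4$ is Gallai-Ramsey-full. For the lower bound I would start from the explicit critical graph $G^k_{n-1}$ of Definition~\ref{Def:C4} and exhibit a $k$-edge-coloring of $G^k_{n-1}\sqcup K_{1,n-2}$ (equivalently $K_n-e$ after relabeling the apex as a genuine vertex joined to all but one vertex) that still has no rainbow triangle and no monochromatic $C_4$. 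The natural candidate is to attach the new apex vertex $v$ to $v_2,v_3,\dots,v_{n-1}$ (omitting $v_1$) and color each edge $vv_i$ the same color as $v_1v_i$, mimicking the copy-a-vertex trick used in Theorem~\ref{Thm:Kp}. The work here is to check that this local copying creates no monochromatic $C_4$: since $v$ is not adjacent to $v_1$, any monochromatic $C_4$ through $v$ would have to use two neighbors $v_i,v_j$ with $v,v_i,v_j$ monochromatic and a fourth vertex completing the cycle; by construction the colors at $v$ agree with those at $v_1$, so such a $C_4$ would translate to a monochromatic $C_4$ in $G^k_{n-1}$ using $v_1$ (or a monochromatic $P_4$ plus the edge $v_iv_j$), contradicting that $G^k_{n-1}$ is $C_4$-free — this needs a short case analysis distinguishing whether the fourth vertex is $v_1$ or not, and I would treat the small-index vertices $v_2,\dots,v_5$ (which carry the two $C_5$'s) carefully, possibly choosing to omit a vertex from $\{v_1,\dots,v_5\}$ rather than $v_1$ if that makes the verification cleaner.

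For the upper bound I must show that every $k$-edge-colored $K_{n-1}\sqcup K_{1,n-2}$, built on an \emph{arbitrary} critical graph $K_{n-1}$ (no rainbow triangle, no monochromatic $C_4$), contains a rainbow triangle or a monochromatic $C_4$ once we add $n-2=k+2$ edges at the apex $v$. So suppose not: we have a rainbow-triangle-free $k$-edge-coloring of the whole graph with no monochromatic $C_4$, where $v$ has degree $k+2$ into an $(n-1)$-vertex critical graph $H$. First I would record structural facts about the critical graph $H=K_{k+3}$ itself: apply the Gallai partition (Lemma~\ref{Lem:G-Part}) and Lemma~\ref{Lem:qneq3}; since $H$ has $k+3$ vertices and no monochromatic $C_4$, each color class is $C_4$-free, hence has few edges, which forces $H$ to essentially look like $G^k_{n-1}$ (a small "core" on a few vertices carrying two colors in a $C_5+C_5$ pattern, and the remaining vertices $v_6,\dots$ each "dominating" in a private new color) — in fact a characterization of the $C_4$-critical graphs is exactly the preliminary step the introduction promises, so I would state and prove that characterization as a claim first. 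Then, with $H$ pinned down up to isomorphism, adding $v$ with $k+2$ neighbors means $v$ misses exactly one vertex of $H$; I would do a pigeonhole on the $k$ colors appearing on the $k+2$ edges at $v$.

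The crux is the pigeonhole argument at $v$. With $k+2$ edges at $v$ and $k$ colors available: either some color, say $c$, appears on $\ge 3$ edges $vu_1,vu_2,vu_3$, and then among $u_1,u_2,u_3$ (three vertices of the known graph $H$) I look for an edge $u_iu_j$ of color $c$ or a common $c$-neighbor — in the structured graph $H$ any color with a triangle or a $P_3$ on $\{u_1,u_2,u_3\}$-side immediately yields a monochromatic $C_4$ through $v$ (and avoiding a $c$-colored $u_iu_j$ for all pairs forces those three edges, plus edges already present, into a rainbow triangle or contradicts the $C_5$-structure); or else the color multiplicities are as flat as possible, i.e.\ essentially each of $k$ colors appears once or twice with total $k+2$, so at least two colors appear exactly twice, and again using that each "private" color class in $H$ is a near-dominating star while the two "core" colors lie on the $C_5$'s, a doubled color at $v$ combined with the corresponding dense-ish color class in $H$ produces a monochromatic $C_4$ or a rainbow triangle with $v$. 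I expect this case analysis — showing the missing vertex and the one unused color (if any) cannot both be arranged to dodge every monochromatic $C_4$ — to be the main obstacle, and the reason the answer is $k+3$ rather than $k+2$ is precisely that with only $k+1$ edges at $v$ the coloring of the lower-bound construction still survives, so the threshold is sharp and the argument must genuinely use all $k+2$ edges.
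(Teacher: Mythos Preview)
There are two genuine gaps in your plan, one in each direction.

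\textbf{Upper bound.} You set out to prove that every $k$-edge-colored $K_{n-1}\sqcup K_{1,n-2}$ (with $n-2=k+2$ edges at the apex) must contain a rainbow triangle or a monochromatic $C_4$. That is the statement $gr_k^*(K_3:C_4)\le k+2$, which is \emph{stronger} than what is claimed and in fact false (the lower bound shows $gr_k^*=k+3$). The correct upper bound $gr_k^*(K_3:C_4)\le k+3=n-1$ is completely trivial: $K_{n-1}\sqcup K_{1,n-1}=K_n$, and by the very definition of $n=gr_k(K_3:C_4)$ every $k$-coloring of $K_n$ already contains the required substructure. No characterization of critical graphs and no pigeonhole at the apex is needed.

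\textbf{Lower bound.} The ``copy a vertex'' trick from Theorem~\ref{Thm:Kp} does \emph{not} transfer to $C_4$. If $v$ copies the colors of $v_1$ on all of $v_2,\dots,v_{n-1}$, then in particular $v$ is color-$1$-adjacent to exactly the color-$1$-neighbors of $v_1$, namely $v_2$ and $v_5$; hence $v_1v_2vv_5v_1$ is a monochromatic $C_4$ in color $1$ (and symmetrically $v_1v_3vv_4v_1$ in color $2$). The case you flag --- ``the fourth vertex is $v_1$'' --- is precisely the case that kills the construction, and by the symmetry of the two $C_5$'s on $\{v_1,\dots,v_5\}$ the same failure occurs no matter which core vertex you omit and copy. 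The reason the trick works for complete graphs but not here is that a copy of $K_p$ containing both the original vertex $u$ and its clone $v$ would need the edge $uv$, whereas a $C_4$ can use $u$ and $v$ as opposite corners without any $uv$ edge. The paper's fix is to omit $v_5$ and color the apex edges \emph{differently} from any existing vertex: $g(vv_2)=g(vv_3)=1$, $g(vv_1)=g(vv_4)=2$, and $g(vv_i)=i-3$ for $i\ge 6$. The point is that $\{v_2,v_3\}$ are \emph{adjacent} on the color-$1$ $C_5$ (so attaching $v$ to them creates a triangle, not a $C_4$), and likewise $\{v_1,v_4\}$ are adjacent on the color-$2$ $C_5$; a direct check then shows no monochromatic $C_4$ and no rainbow triangle.
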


\begin{proof}
Let $n=gr_k(K_{3}: C_{4})$.
By Lemma~\ref{Lem:C4}, $n=k+4$.
Clearly, $gr_{k}^{*}(K_{3}: C_{4})\leq n-1=k+3$.
So we only prove that $gr_{k}^{*}(K_{3}: C_{4})\geq k+3$.
Let $G^{k}_{n-1}\sqcup K_{1, n-2}$ be a graph obtained from $G^{k}_{n-1}$ by adding the edge set $\{vv_i~|~i\in[n-1]-\{5\}\}$, where $G^{k}_{n-1}$ is described in Definition~\ref{Def:C4} and $v$ is the center vertex of $K_{1, n-2}$.
Let $g$ be a $k$-edge-coloring of $G^{k}_{n-1}\sqcup K_{1, n-2}$ such that
$$
g(e)= \begin{cases}
f(e),  & \text{if $e\in E(G^{k}_{n-1})$, where $f$ is the $k$-edge-coloring in Definition~\ref{Def:C4},}\\
1,  & \text{if $e\in \{vv_2, vv_3\}$,}\\
2,  & \text{if $e\in \{vv_1, vv_4\}$,}\\
i-3,  & \text{if $e=vv_i$ for any $6\leq i\leq n-1$.}
\end{cases}
$$
Clearly, the $k$-edge-colored $G^{k}_{n-1}\sqcup K_{1, n-2}$ by $g$ contains neither a rainbow $K_3$ nor a monochromatic $C_4$. So $gr_{k}^{*}(K_{3}: C_{4})\geq n-1=k+3$.
Therefore, $gr_{k}^{*}(K_{3}: C_{4})=k+3$.
\end{proof}

By Lemma~\ref{Lem:2P4} and Lemma~\ref{Lem:C4}, we know that $C_4$ is Gallai-Ramsey-full and Ramsey-full since $gr_{k}^{*}(K_{3}: C_{4})=gr_{k}(K_{3}: C_{4})-1$ and $r_{*}(C_4)=gr_{2}^{*}(K_{3}: C_{4})=R_2(C_4)-1$.
Note that complete graph $K_n$ is also Ramsey-full and Gallai-Ramsey-full.
So we pose a conjecture as following.\\
{\bf Conjecture.}
Let $H$ be a graph with no isolated vertex.
Then $H$ is Ramsey-full if and only if $H$ is Gallai-Ramsey-full.

\section{Gallai-Ramsey non-full graphs}
In this section, we investigate the star-critical Gallai-Ramsey numbers for $P_4$ and $K_{1, m}$.
Thus, we find that $P_4$ and $K_{1, m}$ are not Gallai-Ramsey-full.
In order to determine the star-critical Gallai-Ramsey number $gr_{k}^{*}(K_{3}: P_{4})$, first we study  the structure of the critical graphs on $P_4$.
By Lemma~\ref{Lem:P4}, $gr_{k}(K_3: P_4)=k+3$ for any positive integer $k$.


\begin{definition}\label{Def:P4}
For any positive integer $k$, let $n=gr_k(K_{3}: P_{4})$, $V(K_{n-1})=\{v_1, v_2, \ldots, v_{n-1}\}$ and color set $[k]$. Define a $k$-edge-coloring $f$ of $K_{n-1}$ as follows: (1) $f(v_1v_2)=f(v_1v_3)=f(v_2v_3)=1$; (2) $f(v_1v_i)=f(v_2v_i)=f(v_3v_i)=i-2$ for any $4\leq i\leq n-1$; (3) For any $4\leq i<j\leq n-1$, $f(v_iv_j)=i-2$ or $j-2$ such that there is no rainbow triangle in the subgraph induced by $\{v_4, \ldots, v_{n-1}\}$.
The $k$-edge-coloring $f$ is called $k$-critical coloring on $P_4$ and let $\mathcal{G}^{k}_{n-1}=\{$all $k$-critical colored $K_{n-1}$ on $P_4\}$.
\end{definition}

Clearly, $\mathcal{G}^{k}_{n-1}\neq\emptyset$. For example, we set $f(v_iv_j)=j-2$ for any $4\leq i< j\leq n-1$. It is easy to check that $f$ is a $k$-critical coloring on $P_4$.
On the other hand, the graphs in Definition~\ref{Def:P4} are not unique when $k\geq3$.
For example, if $k=4$, we can set $f(v_4v_5)=f(v_4v_6)=2$ and $f(v_5v_6)=3$ or $f(v_4v_6)=f(v_5v_6)=4$ and $f(v_4v_5)=2$.
It is easy to check that the two colorings are $k$-critical.


\begin{proposition}\label{Pro:P4}
For any positive integer $k$, let $n=gr_k(K_{3}: P_{4})$. Then $H$ is a $k$-edge-colored complete graph $K_{n-1}$ containing neither a rainbow triangle nor a monochromatic $P_4$ if and only if $H\in\mathcal{G}^{k}_{n-1}$, where $\mathcal{G}^{k}_{n-1}$ is described in Definition~\ref{Def:P4}.
\end{proposition}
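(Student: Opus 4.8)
The plan is to prove the two implications separately, with the nontrivial one by induction on $k$. The ``if'' direction is a direct check: if $H\in\mathcal{G}^{k}_{n-1}$, then color $1$ induces exactly the triangle on $\{v_1,v_2,v_3\}$, and for $2\le c\le k$ every edge of color $c$ is incident with $v_{c+2}$ by clauses (2) and (3), so each color class is a star or a triangle and $H$ has no monochromatic $P_4$; a rainbow triangle is excluded by splitting into cases according to how many of its vertices lie in $\{v_1,v_2,v_3\}$ (two or three of them: two of its edges share a color by (1)--(2); exactly one: two edges share a color by (2)--(3); none: forbidden outright by clause (3)).

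For the ``only if'' direction I would induct on $k$. Since $H$ has no monochromatic $P_4$ and a connected $P_4$-free graph is a star or a triangle, every color class of $H$ is a vertex-disjoint union of stars and triangles; this fact will be used repeatedly. The base cases $k=1$ (where $H=K_3$) and $k=2$ (where $H=K_4$ and its six edges must split into a triangle and a $K_{1,3}$) are settled by inspection. For $k\ge3$, take a Gallai-partition $(V_1,\dots,V_q)$ of $H$ with $q$ smallest (Lemma~\ref{Lem:G-Part}); then $q\ge2$ and $q\ne3$ (Lemma~\ref{Lem:qneq3}). If $q\ge4$, minimality forces the reduced complete graph on the parts to use exactly two colors (one color would let us merge the parts into two), and each of these two color classes on the parts is $P_4$-free, since a $P_4$ among parts lifts to a monochromatic $P_4$ in $H$; because $R_2(P_4)=5$ (Lemma~\ref{Lem:2P4}) this forces $q=4$.

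It then remains to rule out $q=4$ and to handle $q=2$. When $q=4$, the only way to split $E(K_4)$ into two $P_4$-free graphs is a triangle on three of the parts together with a star from the fourth part to those three; if one of the three ``triangle parts'' contained two vertices $x,y$, then choosing representatives $z,w$ from the other two triangle parts makes $x\,z\,y\,w$ a monochromatic $P_4$, so those parts are singletons and the fourth part has $k-1\ge2$ vertices, whence the star's color contains $K_{3,k-1}\supseteq K_{2,2}$ and hence a monochromatic $P_4$ -- a contradiction. Hence $q=2$: the edges between the two parts form a monochromatic complete bipartite graph with no $P_4$, so one part is a single vertex $v$ joined in a single color $c$ to the remaining $K_{k+1}$, and since $k+1\ge3$, color $c$ cannot reappear inside $H-v$ (a color-$c$ edge $xy$ there, with a third vertex $w$, would give the monochromatic $P_4$ $w\,v\,x\,y$). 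Thus $H-v$ is a $K_{k+1}$ with no rainbow triangle and no monochromatic $P_4$ using exactly $k-1$ colors -- fewer would contradict $gr_{k-2}(K_3:P_4)=k+1$ (Lemma~\ref{Lem:P4}) -- so the induction hypothesis gives $H-v\in\mathcal{G}^{k-1}_{k+1}$. Relabelling $H-v$ as in Definition~\ref{Def:P4}, setting $v=v_{n-1}$, and renaming color $c$ as $n-3$, one checks that every edge $vv_j$ has color $n-3=(n-1)-2$ and every triangle through $v$ has two edges of color $n-3$, so clauses (1)--(3) hold for $H$ and $H\in\mathcal{G}^{k}_{n-1}$.

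The main obstacle is the Gallai-partition analysis: bounding the number of parts via $R_2(P_4)=5$ and then eliminating $q=4$ using the rigid ``triangle plus star'' structure of a $P_4$-free $2$-coloring of $K_4$. Once $q=2$ is forced, the remaining work is careful bookkeeping to verify that the reduced critical graph $H-v$ together with the dominating vertex $v$ reassembles, after relabelling vertices and colors, into a member of $\mathcal{G}^{k}_{n-1}$.
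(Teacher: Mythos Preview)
Your proof is correct, and it follows the same overall skeleton as the paper's argument (Gallai-partition with minimal $q$, then $q\in\{2,4\}$ via Lemma~\ref{Lem:2P4}, then induction on $k$), but the two proofs organize the induction differently. The paper first establishes an auxiliary claim---that any critical $H$ contains a monochromatic $K_3$---by a short induction, and then reads off the whole structure of $H$ directly from that triangle (the colors on edges from $\{v_1,v_2,v_3\}$ to the remaining vertices are forced, etc.), with no further inductive step and no relabelling. You instead run the induction on the full structure: after forcing $q=2$ you peel off the singleton part $v$, apply the inductive hypothesis to $H-v$, and then reassemble. Both are valid; the paper's route via the monochromatic $K_3$ is a bit shorter and avoids the bookkeeping of renaming colors and vertices at the end, while your approach has the virtue of also verifying the ``if'' direction explicitly. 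Your elimination of $q=4$ through the rigid triangle-plus-star decomposition of a $P_4$-free $2$-coloring of $K_4$ is more elaborate than the paper's one-line pigeonhole argument (two of the three singleton parts must have the same color towards $V_1$, and $|V_1|\ge 2$ then yields a monochromatic $P_4$), but it is correct.
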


\begin{proof}
It suffices to prove the 'necessity'.
Let $H$ be a $k$-edge-colored $K_{n-1}$ containing neither a rainbow triangle nor a monochromatic $P_4$.
Then, by Lemma~\ref{Lem:G-Part}, there exists a Gallai-partition of $V(H)$. Choose a Gallai-partition with the smallest number of parts, say $(V_{1}, V_{2}, \cdots, V_{q})$.
Then $q\ge 2$ and $q\neq 3$ by Lemma~\ref{Lem:G-Part} and Lemma~\ref{Lem:qneq3}.
Let $H_{i}=H[V_{i}]$ for each part $V_i$.
Since $R_2(P_4)=5$ by Lemma~\ref{Lem:2P4}, we have that $2\leq q\leq 4$.
If there exist two parts $V_i$, $V_j$ such that $|V_i|\geq 2$ and $|V_j|\geq 2$, then there is a monochromatic $P_4$, a contradiction.
So there is at most one part with at least two vertices and all other parts are single vertex.
W.L.O.G., suppose that $|H_2|=\ldots=|H_q|=1$.

\begin{claim}\label{Cla:q=2}
If $k\geq3$, then $q=2$.
\end{claim}

\noindent{\bf Proof.}
Suppose, to the contrary, that $q=4$.
Then $|H_1|\geq 2$ since $k\geq 3$.
By the pigeonhole principle, there are two single vertex parts such that all edges between the two parts and $V_1$ are in the same color.
So there is a monochromatic $P_4$, a contradiction.
Then $q=2$.

\begin{claim}\label{Cla:mk3}
$H$ contains a monochromatic $K_3$.
\end{claim}

\noindent{\bf Proof.}
We prove this claim by induction on $k$.
When $k=1$, it is trivial.
When $k=2$, $H=K_4$.
It is easy to check that $H$ has a monochromatic $K_3$ since $H$ contains no monochromatic $P_4$.
Suppose that $k\geq 3$ and the claim holds for any $k'$ such that $k'<k$.
By Claim~\ref{Cla:q=2}, $q=2$.
Hence, $|H_1|\geq 4$.
W.L.O.G., suppose that the edges between the two parts are colored by 1.
To avoid a monochromatic $P_4$ in color 1, $H_1$ contains no edges colored by 1.
By the induction hypothesis, $H_1$ contains a monochromatic $K_3$. So $H$ contains a monochromatic $K_3$.

By Claim~\ref{Cla:mk3}, W.L.O.G., we can assume that the monochromatic $K_3$ is $v_1v_2v_3$ and this $K_3$ is colored by 1.
Let $V(H)-\{v_1, v_2, v_3\}=\{v_4, \ldots, v_{n-1}\}$.
Then, to avoid a monochromatic $P_4$, the colors of edges between $\{v_4, \ldots, v_{n-1}\}$ and $\{v_1, v_2, v_3\}$ are not 1.
So to avoid a rainbow triangle, all edges in $E(v_i, \{v_1, v_2, v_3\})$ are in the same color for any $i\geq4$.
Then, to avoid a monochromatic $P_4$, the edges in $E(v_i, \{v_1, v_2, v_3\})$ and the edges in $E(v_j, \{v_1, v_2, v_3\})$ have different colors for any $4\leq i<j\leq n-1$.
W.L.O.G., we can assume that the edges in $E(v_i, \{v_1, v_2, v_3\})$ have color $i-2$ for any $i\geq4$.
Then to avoid a rainbow triangle, the edge $v_iv_j$ is colored by $i-2$ or $j-2$ for any $4\leq i<j\leq n-1$.
Thus, $H\in\mathcal{G}^{k}_{n-1}$, where $\mathcal{G}^{k}_{n-1}$ is described in Definition~\ref{Def:P4}.
\end{proof}

\begin{theorem}\label{Thm:starP4}
For any positive integer $k$,
$
gr_{k}^{*}(K_{3}: P_{4})=k.
$
\end{theorem}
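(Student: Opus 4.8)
The plan is to prove $gr_{k}^{*}(K_{3}: P_{4})=k$ by establishing the two inequalities separately, using throughout that $n:=gr_k(K_{3}: P_{4})=k+3$ by Lemma~\ref{Lem:P4}, so $K_{n-1}=K_{k+2}$, and that by Proposition~\ref{Pro:P4} every critical $k$-edge-colored $K_{n-1}$ is, after relabeling, one of the graphs of Definition~\ref{Def:P4}: a monochromatic triangle $v_1v_2v_3$ in color $1$, together with $f(v_\ell v_i)=i-2$ for all $\ell\in\{1,2,3\}$ and $4\le i\le k+2$.

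For the upper bound $gr_{k}^{*}(K_{3}: P_{4})\le k$ I would take an arbitrary $k$-edge-colored $K_{n-1}\sqcup K_{1,k}$ with new vertex $v$. If its $K_{n-1}$ part is not critical we are already done; otherwise adopt the notation above. The key point is purely a counting one: $v$ has $k$ neighbors among the $k+2$ vertices, hence at most two non-neighbors, and since there are only $k-1$ ``outer'' vertices $v_4,\dots,v_{k+2}$, the vertex $v$ must be adjacent to at least one of $v_1,v_2,v_3$, say $v_i$, with $vv_i$ of some color $c$. I then produce a monochromatic $P_4$ directly. If $c=1$, then writing $\{v_i,v_j,v_\ell\}=\{v_1,v_2,v_3\}$, the path $v\,v_i\,v_j\,v_\ell$ uses only color-$1$ edges (two of them from the triangle). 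If $c\ge 2$, then $c\le k$, so $v_{c+2}$ is a genuine vertex of $K_{n-1}$, and since $f(v_i v_{c+2})=f(v_{c+2}v_j)=(c+2)-2=c$ for any choice of $v_j\in\{v_1,v_2,v_3\}\setminus\{v_i\}$, the path $v\,v_i\,v_{c+2}\,v_j$ is monochromatic in color $c$. Either way $K_{n-1}\sqcup K_{1,k}$ contains a monochromatic $P_4$.

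For the lower bound $gr_{k}^{*}(K_{3}: P_{4})\ge k$ I would exhibit a $k$-edge-colored $K_{n-1}\sqcup K_{1,k-1}$ with neither a rainbow triangle nor a monochromatic $P_4$. Take any $H\in\mathcal{G}^k_{n-1}$, attach the new vertex $v$ to exactly the $k-1$ outer vertices $v_4,\dots,v_{k+2}$, and color $vv_i$ by $i-2$. No triangle through $v$ is rainbow, since such a triangle must be $v v_a v_b$ with $4\le a<b\le k+2$, whose three colors $a-2$, $b-2$, and $f(v_a v_b)\in\{a-2,b-2\}$ cannot be pairwise distinct; triangles not through $v$ lie inside $H$. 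For the absence of a monochromatic $P_4$, I would observe from Definition~\ref{Def:P4} that in $H$ every edge of color $t\ge 2$ is incident to $v_{t+2}$; since the added edge $vv_{t+2}$ also has color $t$, the color-$t$ class of the new graph is still a star centered at $v_{t+2}$, while the color-$1$ class is still just the triangle $v_1v_2v_3$. A star and a triangle contain no $P_4$. This shows $s=k-1$ fails (and, restricting the coloring, so does every $s\le k-1$), hence $gr_{k}^{*}(K_{3}: P_{4})\ge k$, and combined with the upper bound equality follows.

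The step that takes the most care is the structural bookkeeping in the upper bound: one must check that $v$ is genuinely forced onto the color-$1$ triangle, that $v_{c+2}$ is an admissible vertex — which is exactly where $2\le c\le k$ enters — and distinct from $v_i$ and $v_j$, and that in each case the three chosen edges really span four distinct vertices all of one color. Once this is set up the argument is short and, notably, never uses the rainbow-triangle hypothesis. I would also verify that the degenerate cases $k=1$ (where $K_{n-1}$ is just the triangle, $v$ attaches to one of its vertices, and color $1$ is the only option) and $k=2$ are covered by the general argument, so that no separate analysis is needed.
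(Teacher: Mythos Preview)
Your proof is correct and follows essentially the same approach as the paper's: both use Proposition~\ref{Pro:P4} to reduce to the structure of Definition~\ref{Def:P4}, establish the upper bound by the pigeonhole observation that $v$ must hit the triangle $\{v_1,v_2,v_3\}$ and then exhibit the path $v\,v_i\,v_{c+2}\,v_j$ (the paper writes this uniformly as $vv_1v_{i+2}v_2$, which also covers $c=1$ via $v_{3}$), and establish the lower bound by attaching $v$ to the outer vertices $v_4,\dots,v_{k+2}$ with $vv_i$ colored $i-2$. Your explicit verification that each color class is a star or a triangle is a welcome addition over the paper's ``Clearly''.
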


\begin{proof}
Let $H\in \mathcal{G}^{k}_{n-1}$, where $\mathcal{G}^{k}_{n-1}$ is defined in Definition~\ref{Def:P4} and $n=gr_k(K_{3}: P_{4})=k+3$.
First we show that $gr_{k}^{*}(K_{3}: P_{4})\geq k$.
When $k=1$, $H$ is a monochromatic $K_3$. Then $gr_{1}^{*}(K_{3}: P_{4})\geq 1$.
So we can assume that $k\geq 2$.
By Definition~\ref{Def:P4}, $H$ contains a monochromatic $K_{3}=v_1v_2v_3$.
Then we construct $H\sqcup K_{1, k-1}$ by adding the edge set $\{vv_i~|~4\leq i\leq n-1\}$ to $H$, where $v$ is the center vertex of $K_{1, k-1}$.
Let $c$ be a $k$-edge-coloring of $H\sqcup K_{1, k-1}$ such that $$
c(e)= \begin{cases}
f(e),  & \text{if $e\in E(H)$,}\\
f(v_1v_i),  & \text{if $e=vv_i$ for any $4\leq i\leq n-1$,}
\end{cases}
$$ where $f$ is the $k$-edge-coloring in Definition~\ref{Def:P4}.
Clearly, this $k$-edge-colored $H\sqcup K_{1, k-1}$ by $c$ contains neither a rainbow triangle nor a monochromatic $P_4$ and $d(v)=k-1$.
Hence, $gr_{k}^{*}(K_{3}: P_{4})\geq k$.

Now we show that $gr_{k}^{*}(K_{3}: P_{4})\leq k$.
Let $G=H\sqcup K_{1, k}$ be a $k$-edge-colored graph and $v$ be the center vertex of $K_{1, k}$.
Since $d_{G}(v)=k$ and $|H|=k+2$ by Definition~\ref{Def:P4}, there is at least one edge of $G$ between $\{v_1, v_2, v_3\}$ and $v$.
W.L.O.G., suppose that $v_1v\in E(G)$ and $v_1v$ is colored by $i$, where $i\in [k]$.
Then there is a monochromatic $P_4=vv_1v_{i+2}v_2$ in color $i$.
Hence, $gr_{k}^{*}(K_{3}: P_{4})\leq k$.
Therefore, we have that $gr_{k}^{*}(K_{3}: P_{4})=k$.
\end{proof}

In the following, we determine the star-critical Gallai-Ramsey number $gr_{k}^{*}(K_{3}: K_{1, m})$. First we characterize all critical graphs on star $K_{1, m}$.
By Lemma~\ref{Lem:star}, for any $k\geq3$ and any $m\geq3$, $$
gr_k(K_{3}: K_{1, m})= \begin{cases}
\frac{5m-6}{2},  & \text{if $m$ is even,}\\
\frac{5m-3}{2},  & \text{if $m$ is odd.}
\end{cases}
$$


\begin{definition}\label{Def:star}
Let $m\geq 3$, $k\geq 3$, $n=gr_k(K_{3}: K_{1, m})$ and color set $[k]$.
For the complete graph $K_{n-1}$, choose a partition $(V_{1}$, $V_{2}$, $V_3$, $V_4$, $V_{5})$ of $V(K_{n-1})$ such that $|V_i|=\frac{m-1}{2}$ for $i\in [5]$ if $m$ is odd and $|V_1|=\frac{m}{2}$, $|V_i|=\frac{m-2}{2}$ for $i\geq 2$ if $m$ is even.
Let $H_i$ be the induced subgraph of $K_{n-1}$ by $V_i$ and $v_i\in V_i$ for any $i\in [5]$.
The subgraph induced by $\{v_1, v_2, v_3, v_4, v_5\}$ consists of two edge-disjoint $C_5$, say $v_1v_2v_3v_4v_5v_1$ and $v_1v_4v_2v_5v_3v_1$.
Define a $k$-edge-coloring $f$ of $K_{n-1}$ as follows:
(1) $f(e)=1$ if $e$ is an edge of $v_1v_2v_3v_4v_5v_1$ and $f(e)=2$ if $e$ is an edge of $v_1v_4v_2v_5v_3v_1$.
Color all edges between $V_i$ and $V_j$ by the color of $v_iv_j$ for any $1\leq i< j\leq 5$ (see Fig. 1(c)).
(2) Color $H_i$ as follows such that each $H_i$ contains no rainbow triangle.
If $m$ is odd, then we color each $H_i$ by the color set $\{3, \ldots, k\}$.
Let $m$ be even. Then we color $H_1$ by the color set $[k]$ such that the subgraph induced by $E_1=\{e\in E(H_1)~|~f(e)=1, 2\}$ is a matching or $E_1=\emptyset$;
We color $H_2$ and $H_5$ by color set $\{2, \ldots, k\}$ such that the subgraph induced by $E_2=\{e\in E(H_2)\cup E(H_5)~|~f(e)=2\}$ is a matching or $E_2=\emptyset$;
We color $H_3$ and $H_4$ by color set $\{1, 3, \ldots, k\}$ such that the subgraph induced by $E_3=\{e\in E(H_3)\cup E(H_4)~|~f(e)=1\}$ is a matching or $E_3=\emptyset$.
The $k$-edge-coloring $f$ is called $k$-critical coloring on star $K_{1, m}$ and let $\mathcal{S}^{k}_{n-1}=\{$all $k$-critical colored $K_{n-1}$ on star $\}$.
\end{definition}

Clearly, $\mathcal{S}^{k}_{n-1}\neq\emptyset$ and each graph in $\mathcal{S}^{k}_{n-1}$ in Definition~\ref{Def:star} contains neither a rainbow triangle nor a monochromatic $K_{1, m}$.

\begin{proposition}\label{Pro:star}
Let $k\geq 3$, $n=gr_k(K_{3}: K_{1, m})$, $m\geq3$ if $m$ is odd and $m\geq12$ otherwise.
Then $H$ is a $k$-edge-colored $K_{n-1}$ containing neither a rainbow triangle nor a monochromatic $K_{1, m}$ if and only if $H\in\mathcal{S}^{k}_{n-1}$, where $\mathcal{S}^{k}_{n-1}$ is described in Definition~\ref{Def:star}.
\end{proposition}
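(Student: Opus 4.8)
The plan is to prove only the ``necessity'', since the ``sufficiency'' is the observation recorded right after Definition~\ref{Def:star}. So let $H$ be a $k$-edge-colored $K_{n-1}$ with no rainbow triangle and no monochromatic $K_{1,m}$, and by Lemma~\ref{Lem:G-Part} fix a Gallai-partition $(V_1,\dots,V_q)$ of $V(H)$ with $q$ as small as possible; then $q\ge 2$ and $q\neq 3$ by Lemma~\ref{Lem:qneq3}. Write $s_i=|V_i|$, $H_i=H[V_i]$, and (relabeling colors) assume every edge joining distinct parts has color $1$ or $2$. The whole argument hinges on one identity: for $u\in V_i$ and $c\in\{1,2\}$, the number of color-$c$ edges at $u$ equals $\big(\sum_{j:\,V_iV_j\text{ has color }c} s_j\big)+\Delta^{H_i}_c(u)$, and since $H$ has no monochromatic $K_{1,m}$ this quantity is at most $m-1$.

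\emph{Step 1: forcing $q=5$ and the reduced graph.} Summing the bound for $c=1$ and $c=2$ gives $(n-1)-s_i\le 2(m-1)$, hence $s_i\ge (n-1)-2(m-1)$, which equals $\tfrac{m-1}{2}$ if $m$ is odd and $\tfrac{m-4}{2}$ if $m$ is even. If some part were joined in one fixed color to three or more other parts, a vertex of that part would have color-degree at least $3\cdot\tfrac{m-1}{2}\ge m$ (odd $m$, using $m\ge 3$) or $3\cdot\tfrac{m-4}{2}\ge m$ (even $m$, using $m\ge 12$)---this is precisely where the hypothesis $m\ge 12$ is needed---contradicting the absence of a monochromatic $K_{1,m}$. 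Thus every part is joined to at most $2+2=4$ others, so $q\le 5$. Moreover $q=2$ is impossible, because the smaller of the two parts meets at least $\lceil(n-1)/2\rceil\ge m$ edges of a single color; and $q=4$ is impossible by checking the only two $2$-colorings of $K_4$ in which every vertex sees both colors ($\{$perfect matching$,\,C_4\}$ and $\{P_4,P_4\}$): in each one finds two vertices, each joined monochromatically to two parts, with the two pairs together covering all four indices, which produces inequalities $s_a+s_b\le m-1$ forcing $n-1\le 2(m-1)$ --- impossible. Hence $q=5$, and since each part is now joined to exactly two others in each of the colors $1,2$, each color class of the reduced $K_5$ is a $2$-regular spanning subgraph, i.e.\ a $C_5$, so the reduced graph is the union of a color-$1$ $C_5$ and a color-$2$ $C_5$, exactly as in Definition~\ref{Def:star}.

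\emph{Step 2: part sizes and internal colorings.} The two $5$-cycles between them cover all $\binom 52$ pairs of parts, so the basic bound applied to every part yields $s_i+s_j\le m-1$ for all $i\neq j$. Together with $\sum_i s_i=n-1$ and the lower bounds on the $s_i$, an elementary integer optimization forces $s_i=\tfrac{m-1}{2}$ for all $i$ when $m$ is odd, and $(s_1,\dots,s_5)=\big(\tfrac m2,\tfrac{m-2}2,\tfrac{m-2}2,\tfrac{m-2}2,\tfrac{m-2}2\big)$ after relabeling when $m$ is even. Substituting these sizes back into the identity controls the colorings of the $H_i$: in the odd case every cross-color sum equals $m-1$, so no $H_i$ has an edge of color $1$ or $2$ at all; in the even case one reads off, in the positions of Definition~\ref{Def:star}, that two parts carry no color-$1$ edge (and have color-$2$ degree $\le 1$ inside), two parts carry no color-$2$ edge (and have color-$1$ degree $\le 1$ inside), and the large part has internal color-$1$ and color-$2$ degrees both $\le 1$. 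Finally, to strengthen ``each of colors $1,2$ forms a matching inside the large part'' to ``their union forms a matching'', I would invoke the no-rainbow-triangle condition on $H_1$: a vertex incident inside $V_1$ with both a color-$1$ and a color-$2$ edge, together with a suitable third vertex of $V_1$, closes a triangle whose third edge must be color $1$ or color $2$, and either choice violates the matching property of that color class. This reproduces exactly the conditions defining $\mathcal S^k_{n-1}$, so $H\in\mathcal S^k_{n-1}$.

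I expect the even case to be the main obstacle. For odd $m$, everything after ``$q=5$'' is basically arithmetic; but for even $m$ one has to (i) run the Step~1 degree argument with the weaker bound $s_i\ge\tfrac{m-4}{2}$, which is exactly what pins the threshold at $m\ge 12$ and explains why small even $m$ would need a separate analysis; (ii) carry out the size optimization carefully, since several size vectors are a priori consistent with $\sum s_i=n-1$, $s_i+s_j\le m-1$ and $s_i\ge\tfrac{m-4}2$, and integrality is what eliminates all but one; and (iii) combine the per-part degree restrictions with the rainbow-triangle-free hypothesis to recover the precise ``matching'' conditions, which are strictly stronger than what the degree count alone yields.
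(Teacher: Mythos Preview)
Your proof is correct and follows the same route as the paper's: take a minimal Gallai-partition, get lower bounds on the part sizes from the absence of a monochromatic $K_{1,m}$, eliminate $q\in\{2,4\}$ to force $q=5$ with the two-$C_5$ reduced graph, and then pin down the exact sizes and the internal coloring constraints. Your treatment is in fact slightly more careful in two places---you dispose of both $q=4$ reduced-graph configurations by direct degree inequalities (the paper instead uses minimality of $q$ to discard the matching/$C_4$ case), and you explicitly invoke rainbow-triangle-freeness to upgrade ``colors $1$ and $2$ each form a matching inside $H_1$'' to ``their union is a matching,'' a step the paper asserts from the $K_{1,m}$-freeness alone without justification.
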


\begin{proof}
It suffices to prove the 'necessity'.
Let $H$ be a $k$-edge-colored $K_{n-1}$ containing neither a rainbow triangle nor a monochromatic $K_{1, m}$. Then, by Lemma~\ref{Lem:G-Part}, there exists a Gallai-partition of $V(H)$.
Choose a Gallai-partition with the smallest number of parts, say $(V_{1}, V_{2}, \cdots, V_{q})$, where $|V_1|\geq |V_2|\geq\ldots\geq |V_q|$. Let $H_{i}=H[V_{i}]$ and $v_i\in V_i$ for each $i\in [q]$.
Then $q\ge 2$ and $q\neq 3$ by Lemma~\ref{Lem:G-Part} and Lemma~\ref{Lem:qneq3}.
W.L.O.G., let 1 and 2 be the colors of edges between parts of the partition.
Suppose that $|V_q|\leq\frac{m-3}{2}$ if $m$ is odd and $|V_q|\leq\frac{m-6}{2}$ if $m$ is even.
Then $|V(H)-V_q|\geq 2m-1$.
Let $v\in V_q$. By the pigeonhole principle, there are at least $m$ edges in $E(v, V(H)-V_q)$ with the same color.
It implies a monochromatic $K_{1, m}$, a contradiction.
Then $|V_i|\geq \frac{m-1}{2}$ for any $i\in [q]$ if $m$ is odd and $|V_i|\geq \frac{m-4}{2}$ for any $i\in [q]$ if $m$ is even.

\begin{claim}\label{Cla:K13}
The subgraph induced by $\{v_1, \ldots, v_q\}$ contains no monochromatic $K_{1, 3}$.
\end{claim}

\noindent{\bf Proof.} Suppose, to the contrary, that the subgraph induced by $\{v_1, \ldots, v_q\}$ contains a monochromatic $K_{1, 3}$.
It follows that there exist four parts, say $V_i$, $V_{j_1}$, $V_{j_2}$ and $V_{j_3}$, such that all edges in $E(V_i, V_{j_1}\cup V_{j_2}\cup V_{j_3})$ are the same color.
Since $3\times\frac{m-1}{2}\geq m$ if $m\geq3$ is odd and $3\times\frac{m-4}{2}\geq m$ if $m\geq12$ is even, it follows that there is a monochromatic $K_{1, m}$, a contradiction.

\begin{claim}\label{Cla:q=5}
q=5.
\end{claim}

\begin{figure}
	\begin{center}
    \includegraphics[scale=0.5]{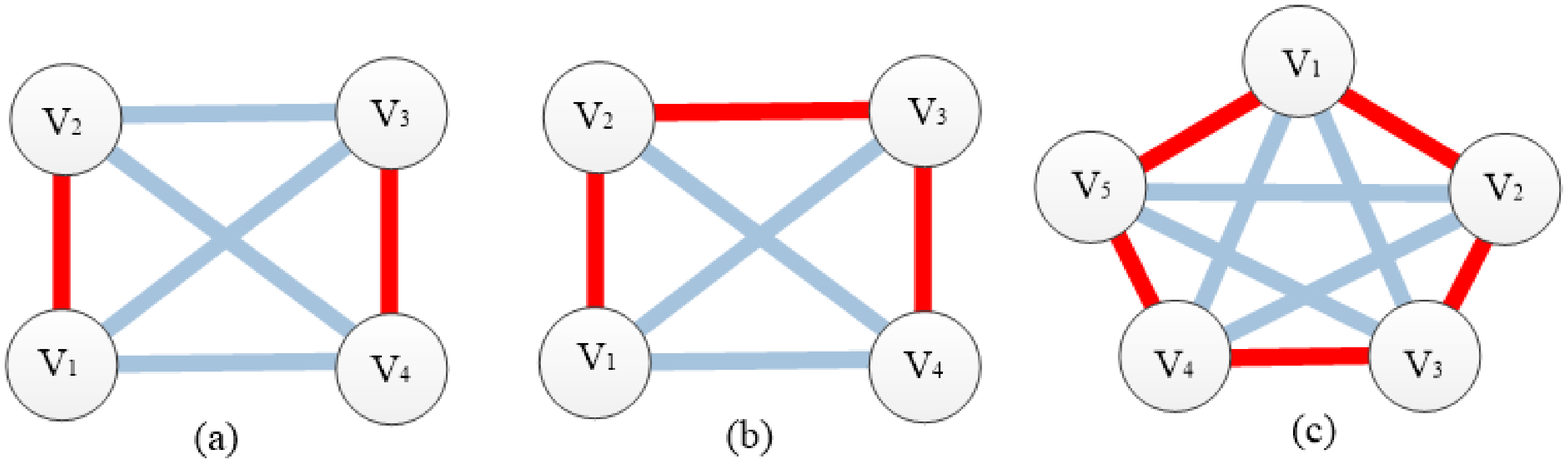}\\
		\caption{}
		\label{Fig:1}
	\end{center}
\end{figure}

\noindent{\bf Proof.}
If $q=2$, then $|V_1|\geq\frac{|H|}{2}\geq m$, which implies that $H$ has a monochromatic $K_{1, m}$, a contradiction.
Now suppose that $q=4$.
By Lemma~\ref{Lem:qneq3}, for each part $V_i$, there are exactly two colors on the edges in $E(V_i, V(G)-V_i)$ for $i\in [4]$.
Thus, the subgraph induced by $\{v_1, v_2, v_3, v_4\}$ must be the graph as shown in Fig.1 (a) or Fig.1 (b).
By the minimality of $q$, the subgraph induced by $\{v_1, v_2, v_3, v_4\}$ consists of two edge-disjoint $P_4$ such that one is colored by 1 and the other is colored by 2.
W.L.O.G., let $v_1v_2v_3v_4$ be colored by 1 and $v_3v_1v_4v_2$ colored by 2 (see Fig.1 (b)).
Since $|V_1|+|V_2|\geq\frac{|H|}{2}\geq m$, it follows that the induced subgraph by $E(V_1\cup V_2, V_4)$ contains a monochromatic star $K_{1, m}$ in color 2, a contradiction.
So $q\geq 5$.
By Claim~\ref{Cla:K13}, the subgraph induced by $\{v_1, \ldots, v_q\}$ contains no monochromatic $K_{1, 3}$.
Since $R_2(K_{1, 3})=6$ by Lemma~\ref{Lem:2P4}, we have that $q\leq 5$.
So $q=5$.

By Claim~\ref{Cla:K13}, the subgraph induced by $\{v_1, \ldots, v_5\}$ contains no monochromatic $K_{1, 3}$.
So this subgraph consists of two edge-disjoint $C_5$ such that one is colored by 1 and the other is colored by 2.
W.L.O.G., let $v_1v_2v_3v_4v_5v_1$ be colored by 1 and $v_1v_4v_2v_5v_3v_1$ colored by 2 (see Fig.1 (c)).
When $m\geq3$ is odd, since $\frac{|H|}{5}=\frac{m-1}{2}$ and every part has order at least $\frac{m-1}{2}$, it follows that each part contains exactly $\frac{m-1}{2}$ vertices.
To avoid a monochromatic $K_{1, m}$, $H_i$ contains no edge with color 1 or 2 for any $i\in[5]$.
Hence, for any $i\in [5]$, $H_i$ is colored by color set $\{3, \ldots, k\}$.
When $m\geq 12$ is even, if there is a part with $\frac{m-4}{2}$ vertices, then there exists a monochromatic $K_{1, m}$ since there must be two parts, say $V_1$ and $V_2$, such that $|V_1|+|V_2|\geq m$.
Hence, every part has at least $\frac{m-2}{2}$ vertices.
Since $|H|=\frac{5m-8}{2}$, we have that $|V_1|=\frac{m}{2}$ and $|V_2|=\ldots=|V_5|=\frac{m-2}{2}$.
To avoid a monochromatic $K_{1, m}$, if $H_1$ contains edges colored by 1 or 2, then $\{e\in E(H_1)~|~ f(e)=1, 2\}$ is a matching.
To avoid a monochromatic $K_{1, m}$, $H_2$ and $H_5$ contain no edges colored by 1. If there exist edges colored by 2 in $H_2$ or $H_5$, then $\{e\in E(H_2)\cup E(H_5)~|~f(e)=2\}$ is a matching.
Similarly, $H_3$ and $H_4$ contain no edges colored by 2. If there exist edges colored by 1 in $H_3$ or $H_4$, then $\{e\in E(H_3)\cup E(H_4)~|~f(e)=1\}$ is a matching.
Thus, the graph $H\in\mathcal{S}^{k}_{n-1}$, where $\mathcal{S}^{k}_{n-1}$ is described in Definition~\ref{Def:star}.
\end{proof}

\begin{theorem}\label{Thm:star}
For any $k\geq 3$,
$
gr_k^{*}(K_{3}: K_{1, m})= \begin{cases}
2m-2,  & \text{if $m\geq 12$ is even,}\\
m,  & \text{if $m\geq 3$ is odd.}
\end{cases}
$
\end{theorem}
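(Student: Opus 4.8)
The plan is to prove the two bounds separately for each parity, using the characterization of critical graphs in Proposition~\ref{Pro:star} for the lower bound and a counting argument (pigeonhole on degrees) for the upper bound. Throughout, write $n=gr_k(K_3:K_{1,m})$, so $|K_{n-1}|=n-1$ equals $\frac{5m-6}{2}-1=\frac{5m-8}{2}$ when $m$ is even and $\frac{5m-3}{2}-1=\frac{5m-5}{2}$ when $m$ is odd.

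For the lower bound, I would take an arbitrary $H\in\mathcal{S}^k_{n-1}$ from Definition~\ref{Def:star} and attach a new vertex $v$ of the appropriate degree, extending the coloring so that $H\sqcup K_{1,s}$ still avoids both a rainbow triangle and a monochromatic $K_{1,m}$. The natural choice is to make $v$ a ``clone'' of one of the representative vertices $v_i$: join $v$ to $V(H)\setminus V_i$ and color the edge from $v$ to $V_j$ ($j\neq i$) with the color of $v_iv_j$, while within $V_i$ we may join $v$ to a few more vertices using the colors $\{3,\dots,k\}$ (odd case) provided we never create a vertex of monochromatic degree $m$. In the odd case each part has $\frac{m-1}{2}$ vertices; cloning $v_i$ contributes $2\cdot\frac{m-1}{2}=m-1$ edges to the four other parts (two colors, $\frac{m-1}{2}$ of color $1$ and $\frac{m-1}{2}$ of color $2$ from the two $C_5$'s meeting $v_i$), and then one can add a single further edge inside $V_i$ in some color from $\{3,\dots,k\}$ without reaching degree $m$ anywhere — but we must also check the old vertices of $V_i$ do not blow past $m$. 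The target value $s=m$ in the odd case and $s=2m-2$ in the even case tells me exactly how many extra neighbours are affordable, and I would choose the part and the extra edges to match; the key point to verify is that every vertex (old and new) keeps each monochromatic degree $\le m-1$ and no rainbow triangle appears (the latter is automatic since $v$ behaves like $v_i$ toward the other parts and the colors $\ge 3$ used inside $V_i$ interact trivially with the blow-up structure).

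For the upper bound I want to show that in \emph{any} $k$-edge-colored $K_{n-1}\sqcup K_{1,s}$ with $s=m$ (odd) or $s=2m-2$ (even), a rainbow triangle or monochromatic $K_{1,m}$ is forced. Assuming there is no rainbow triangle, apply Proposition~\ref{Pro:star} to the underlying $K_{n-1}$: it lies in $\mathcal{S}^k_{n-1}$, so it has the explicit $5$-part blow-up-of-$C_5$-pair structure. Now the new vertex $v$ has $s$ neighbours among the $n-1$ vertices, which are distributed among the five parts $V_1,\dots,V_5$. The crucial structural fact is that the colors $1$ and $2$ on the ``outer'' edges of any $v_i$ each already appear on $\frac{m-1}{2}$ (or $\frac{m-2}{2}$) edges leaving $V_i$, and the colors $\ge 3$ are confined to matchings inside the parts; so if $v$ sends many edges into the parts, to avoid a rainbow triangle with the outer $C_5$-edges the edge $vx$ for $x\in V_i$ is forced into a restricted color set depending on which part $x$ lies in, and then one part accumulates $\ge m$ edges of a single color at $v$, or else $v$ together with $x$ completes a monochromatic $K_{1,m}$ centered at some old vertex. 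Making this precise is a short case analysis on how $s$ neighbours split into five groups: with $s=m$ (odd), since $5\cdot\frac{m-1}{2}<$ the room available, some part receives enough neighbours to force the conclusion; with $s=2m-2$ (even), the slightly larger parts and the matchings inside them mean one needs the larger value $2m-2$, which is where the asymmetry between the two parities comes from.

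The main obstacle, I expect, is the even case upper bound: there the critical graphs in $\mathcal{S}^k_{n-1}$ are genuinely non-rigid (the matchings $E_1,E_2,E_3$ inside the parts can be nonempty), so the new vertex $v$ may ``hide'' some of its edges in colors $1$ or $2$ inside a part without immediately creating a monochromatic $K_{1,m}$, and one must argue carefully that pushing $s$ up to $2m-2$ defeats every such configuration simultaneously. I would handle this by fixing, for each part $V_i$, the maximum number of color-$1$ and color-$2$ edges $v$ could safely send into $V_i$ (bounded via the matching structure and the existing monochromatic degrees of the vertices of $V_i$), summing these bounds over $i=1,\dots,5$ together with at most one edge per additional color $\ge 3$, and checking that the total is exactly $2m-3<2m-2$, which forces a monochromatic $K_{1,m}$ once $d(v)=2m-2$. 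The odd case should then fall out as an easier version of the same accounting, giving the threshold $m$.
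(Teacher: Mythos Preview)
Your lower-bound construction does not work. Cloning $v_i$ joins $v$ to all four other parts, giving $d(v)=4\cdot\frac{m-1}{2}=2(m-1)$ in the odd case (not $m-1$ as you write), and, more seriously, it creates a monochromatic $K_{1,m}$ at old vertices: any $x\in V_j$ with $j\ne i$ already has exactly $m-1$ edges of color $f(v_iv_j)$ inside $H$ (to the two parts adjacent to $V_j$ in that color, each of size $\frac{m-1}{2}$), so the new edge $vx$ in that same color pushes $x$ to color-degree $m$. The paper's odd lower bound instead gives every edge at $v$ a single color from $\{3,\dots,k\}$ and joins $v$ only to $V_1\cup V_2$, obtaining $d(v)=m-1$. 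The even lower bound joins $v$ to $V_1\cup V_2\cup V_3\cup V_5$ with color~$1$ on $E(v,V_1\cup V_3)$ and color~$2$ on $E(v,V_2\cup V_5)$; this pattern is not the neighborhood pattern of any $v_i$, and it requires first choosing $H\in\mathcal{S}^k_{n-1}$ with no color-$1$ or color-$2$ edges inside the parts.

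Your upper-bound sketch also misses the driving mechanism. In the odd case the decisive first step is that \emph{any} edge from $v$ in color $1$ or $2$ already yields a monochromatic $K_{1,m}$ centered at its old endpoint (whose color-$1$ and color-$2$ degrees in $H$ are both exactly $m-1$); once all of $v$'s edges use colors $\ge3$, the inequality $d(v)=m>2\cdot\frac{m-1}{2}$ forces neighbors in at least three parts, and rainbow-freeness then makes all of $v$'s edges the same color. In the even case the paper does not sum per-part caps; it splits on whether $v$ has any edge of color $\ge 3$. If so, rainbow-freeness together with the fact that vertices in $V_2,\dots,V_5$ already have color-$1$ or color-$2$ degree $m-1$ propagates that color to all of $v$'s edges outside one part, giving at least $2m-2-\frac{m}{2}\ge m$ monochromatic edges at $v$. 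If not, then $d(v)=2m-2>(n-1)-\min_i|V_i|=2m-3$ forces $E(v,V_i)\neq\emptyset$ for every $i$, after which a short case analysis on the colors appearing in $E(v,V_1)$ finishes. Your assertion ``at most one edge per additional color $\ge 3$'' is false (the odd lower-bound witness has $m-1$ edges all of color~$3$), so the proposed counting cannot close the argument.
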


\begin{proof}
Let $n=gr_k(K_{3}: K_{1, m})$.
Now we consider the following cases.

\begin{case}\label{case:modd}
$m\geq3$ is odd.
\end{case}

Let $H\in \mathcal{S}^{k}_{n-1}$, where $\mathcal{S}^{k}_{n-1}$ is defined in Definition~\ref{Def:star}.
First we show that $gr_{k}^{*}(K_{3}: K_{1, m})\geq m$.
We construct $H\sqcup K_{1, m-1}$ by adding the edge set $\{vv_i~|~v_i\in V_1\cup V_2\}$ to $H$, where $v$ is the center vertex of $K_{1, m-1}$.
Let $c$ be a $k$-edge-coloring of $H\sqcup K_{1, m-1}$ such that $$
c(e)= \begin{cases}
f(e),  & \text{if $e\in E(H)$, where $f$ is the $k$-edge-coloring in Definition~\ref{Def:star},}\\
3,  & \text{if $e\in E(v, V(H))$.}
\end{cases}
$$ Clearly, this $k$-edge-colored graph $H\sqcup K_{1, m-1}$ by $c$ contains neither a rainbow triangle nor a monochromatic $K_{1, m}$ and $d(v)=m-1$.
Hence, $gr_{k}^{*}(K_{3}: K_{1, m})\geq m$.
Now we show that $gr_{k}^{*}(K_{3}: K_{1, m})\leq m$.
Let $G=H\sqcup K_{1, m}$ be a $k$-edge-colored graph and $v$ the center vertex of $K_{1, m}$.
Suppose that $G$ contains no rainbow triangle.
Then we prove that $G$ contains a monochromatic $K_{1, m}$.
By the definition of $H$, if there exists an edge $e\in E(v, V(H))$ in color 1 or 2, then there is a monochromatic $K_{1, m}$ in color 1 or 2.
So we can assume that all edges in $E(v, V(H))$ have color in $\{3, \ldots, k\}$.
Since $H$ consists of 5 parts such that every part has exactly $\frac{m-1}{2}$ vertices and $|E(v, V(H))|=m$, we have that there are at least 3 parts of $H$, say $V_1$, $V_2$, $V_3$, such that $E(v, V_i)\neq\emptyset$ for any $i\in [3]$.
Then to avoid a rainbow triangle, all edges in $E(v, V(H))$ have the same color.
So there is a monochromatic $K_{1, m}$ with the center vertex $v$.
Hence, $gr_{k}^{*}(K_{3}: K_{1, m})\leq m$.
Therefore, we have that $gr_{k}^{*}(K_{3}: K_{1, m})=m$.

\begin{case}\label{case:meven}
$m\geq12$ is even.
\end{case}

First we show that $gr_{k}^{*}(K_{3}: K_{1, m})\geq 2m-2$.
Let $H\in \mathcal{S}^{k}_{n-1}$ such that each subgraph $H_i$ of $H$ is colored by the color set $\{3, \ldots, k\}$, where $\mathcal{S}^{k}_{n-1}$ is defined in Definition~\ref{Def:star}.
We construct $H\sqcup K_{1, 2m-3}$ by adding the edge set $\{vv_i~|~v_i\in V_1\cup V_2\cup V_3\cup V_5\}$ to $H$, where $v$ is the center vertex of $K_{1, 2m-3}$.
Let $c$ be a $k$-edge-coloring of $H\sqcup K_{1, 2m-3}$ such that $$
c(e)= \begin{cases}
f(e),  & \text{if $e\in E(H)$, where $f$ is the $k$-edge-coloring in Definition~\ref{Def:star},}\\
1,  & \text{if $e\in E(v, V_1\cup V_3)$,}\\
2,  & \text{if $e\in E(v, V_2\cup V_5)$.}\\
\end{cases}
$$ Clearly, this $k$-edge-colored graph $H\sqcup K_{1, 2m-3}$ by $c$ contains neither a rainbow triangle nor a monochromatic $K_{1, m}$ and $d(v)=2m-3$.
Hence, $gr_{k}^{*}(K_{3}: K_{1, m})\geq 2m-2$.

Now we show that $gr_{k}^{*}(K_{3}: K_{1, m})\leq 2m-2$.
Let $H\in \mathcal{S}^{k}_{n-1}$, $G=H\sqcup K_{1, 2m-2}$ be a $k$-edge-colored graph and $v$ be the center vertex of $K_{1, 2m-2}$.
Suppose that $G$ contains no rainbow triangle.
We prove that $G$ has a monochromatic $K_{1, m}$ in the following.

First, suppose that there exists an edge $e\in E(v, V(H))$ in color $\{3, \ldots, k\}$, say $e$ is colored by 3.
W.L.O.G., let $e\in E(v, V_1)$ or $e\in E(v, V_4)$ by the symmetry of $V_2$, $V_3$, $V_4$ and $V_5$ in $H$ (see Fig.1(c)).
First we assume that $e\in E(v, V_1)$. Then to avoid a rainbow triangle, all edges in $E(v, V_2\cup V_5)$ are colored by 1 or 3.
If there exists an edge in $E(v, V_2\cup V_5)$ colored by 1, then there is a monochromatic $K_{1, m}$ in color 1.
So we can assume that all edges in $E(v, V_2\cup V_5)$ are colored by 3.
Then to avoid a rainbow triangle, all edges in $E(v, V_3\cup V_4)$ are colored by 3.
So there is a monochromatic $K_{1, m}$ in color 3.
Now we assume that $e\in E(v, V_4)$.
Then to avoid a rainbow triangle, all edges in $E(v, V_5)$ are colored by 1 or 3.
If there exists an edge in $E(v, V_5)$ colored by 1, then there is a monochromatic $K_{1, m}$ in color 1.
So we can assume that all edges in $E(v, V_5)$ are colored by 3.
Then to avoid a rainbow triangle, all edges $E(v, V_3)$ must be colored by 3.
It follows that all edges in $E(v, V_1\cup V_2)$ are colored by 3 since $G$ contains no rainbow triangle.
Hence, there is a monochromatic $K_{1, m}$ in color 3.

Now we can assume that all edges in $E(v, V(H))$ are colored by 1 or 2.
By the definition of $H$ in Definition~\ref{Def:star}, for any part $V_i$, $E(v, V_i)\neq\emptyset$ since $|E(v, V(H))|=2m-2$.
If there exists an edge in $E(v, V_2\cup V_5)$ colored by 1, then there is a monochromatic $K_{1, m}$ in color 1.
Similarly, if there exists an edge in $E(v, V_3\cup V_4)$ colored by 2, then there is a monochromatic $K_{1, m}$ in color 2.
Thus, suppose that all edges in $E(v, V_2\cup V_5)$ are colored by 2 and all edges in $E(v, V_3\cup V_4)$ are colored by 1.
Let $a, b\in V_1$. Suppose that the color of $va$ is 1 and the color of $vb$ is 2.
To avoid a rainbow triangle, the edge $ab$ must be colored by 1 or 2.
Then there is a monochromatic $K_{1, m}$ in color 1 or 2.
Hence, suppose that all edges in $E(v, V_1)$ are in the same color which is either 1 or 2.
Thus, there are $m$ edges in $E(v, V_1\cup V_3\cup V_4)$ colored by 1 or there are $m$ edges in $E(v, V_1\cup V_2\cup V_5)$ colored by 2, which follows a monochromatic $K_{1, m}$.
Hence, $gr_{k}^{*}(K_{3}: K_{1, m})\leq 2m-2$.
Therefore, $gr_{k}^{*}(K_{3}: K_{1, m})=2m-2$.
\end{proof}

\noindent{\bf Remark.}
For any $m\geq3$ is odd and $m\geq12$ is even, $gr_{k}^{*}(K_3: K_{1, m})$ is determined by Theorem~\ref{Thm:star}.
When $m=4$, $gr_{k}(K_3: K_{1, 4})=7$ by Lemma~\ref{Lem:star}.
We can verify that $gr_{k}^{*}(K_3: K_{1, 4})=6$ (the proof is omitted).
When $m=6, 8, 10$, the problem to determine $gr_{k}^{*}(K_3: K_{1, m})$ is open.

\section{Star-critical Gallai-Ramsey numbers for general graphs}

In 2010, Gy\'{a}rf\'{a}s et al. \cite{AGAS} proved the general behavior of $gr_{k}(K_3: H)$.
It turns out that for some graphs $H$, the order of magnitude of $gr_{k}(K_3: H)$ seems hopelessly difficult to determine.
So finding the exact value of $gr_{k}^{*}(K_3: H)$ is far from trivial.
Thus, in this section we investigate the general behavior of star-critical Gallai-Ramsey numbers for general graphs.
For general bipartite graph $H$, we can only consider the case that $H$ is not a star since the exact value of $gr_{k}^{*}(K_3: K_{1, m})$ is determined in Theorem~\ref{Thm:star}.
First give the following lemmas and definitions.

For a connected bipartite graph $H$, define $s(H)$ to be the order of the smaller part of $H$ and $l(H)$ to be the order of the larger part.
For a connected non-bipartite graph $H$, call a graph $H^{'}$ a \emph{merge} of $H$ if $H^{'}$ can be obtained from $H$ by identifying some independent sets of $H$ (and removing any resulting repeated edges).
Let $\mathscr{H}$ be the set of all possible merges of $H$ and $R_{2}(\mathscr{H})$ the minimum integer $n$ such that every 2-edge-colored $K_n$ contains a monochromatic graph in $\mathscr{H}$.
Then there exists a 2-edge-colored critical graph $K_{n-1}$ containing no monochromatic graph in $\mathscr{H}$.
Let $m(H)=R_{2}(\mathscr{H})$ and $r_{*}(\mathscr{H})$ be the smallest integer $r$ such that every 2-edge-colored $K_{m(H)-1}\sqcup K_{1, r}$ contains a monochromatic graph in $\mathscr{H}$.
Then there exists a 2-edge-colored critical graph $K_{m(H)-1}\sqcup K_{1, r-1}$ containing no monochromatic graph in $\mathscr{H}$.
The \emph{chromatic number} of a graph $G$, denoted by $\chi(G)$, is the smallest number of colors needed to color the vertices of $G$ so that no two adjacent vertices share the same color.


\begin{lemma}{\upshape \cite{WMSX}}\label{Lem:bipartite}
Let $H$ be a connected bipartite graph and $k$ an integer such that $k\geq2$. Then
$$gr_{k}(K_3: H)\geq R_{2}(H)+(k-2)(s(H)-1).$$
\end{lemma}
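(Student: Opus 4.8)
The plan is to exhibit an explicit $k$-edge-coloring of $K_{N}$ with $N = R_{2}(H)+(k-2)(s(H)-1)-1$ that contains neither a rainbow triangle nor a monochromatic copy of $H$ in any color; this gives the stated lower bound. The construction is a nested (iterated) blow-up: start from a $2$-edge-colored critical graph for $R_{2}(H)$, and successively attach, for each of the colors $3,4,\dots,k$, a new block of $s(H)-1$ vertices joined to everything constructed so far by a single new color.

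More precisely, first I would take a $2$-edge-colored $K_{R_{2}(H)-1}$ in colors $1$ and $2$ with no monochromatic $H$ (this exists by definition of $R_{2}(H)$). Call this graph $G_{2}$. Then for $t = 3,\dots,k$ I would build $G_{t}$ from $G_{t-1}$ by adding an independent set $W_{t}$ of $s(H)-1$ new vertices, coloring every edge from $W_{t}$ to $V(G_{t-1})$ with color $t$, and leaving $G_{t}[W_{t}]$ edgeless (it has no edges anyway since $|W_t|$ vertices form an independent set, but to be a complete graph we must color those internal edges too — I would color the edges inside $W_t$ arbitrarily among colors $1$ and $2$, or more safely reuse the structure of $G_2$ scaled down; the cleanest choice is to make $W_t$ a copy of a small color-$1$/color-$2$ critical configuration, but since $|W_t| = s(H)-1 < s(H) \le R_2(H)$ even an all-color-$1$ clique on $W_t$ contains no monochromatic $H$, because $H$, being connected bipartite with smaller part of size $s(H)$, has more than $s(H)-1$ vertices). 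The final graph $G_{k}$ has $R_{2}(H)-1 + (k-2)(s(H)-1) = N$ vertices.

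The two things to check are: (i) $G_{k}$ has no rainbow triangle, and (ii) $G_{k}$ has no monochromatic $H$. For (i), any triangle either lies entirely in $G_{2}$ (only two colors available), or has at least one vertex in some $W_{t}$ with $t$ maximal among the blocks met by the triangle; the two edges of the triangle incident to that vertex both go "downward" to $G_{t-1}\cup(\text{earlier blocks})$, hence both have color $t$, so the triangle is not rainbow. For (ii), fix a color $i$. If $i\in\{1,2\}$, the color-$i$ subgraph lives inside $G_{2}$ together with some internal $W_t$-cliques, each too small to contain $H$, and $G_{2}$ by construction has no monochromatic $H$ in colors $1,2$ — one must just make sure the internal $W_t$ colorings don't combine with $G_2$, which they don't since $W_t$ is joined to $G_2$ only in color $t\ge 3$. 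If $i\ge 3$, the color-$i$ edges form a subgraph of the complete bipartite graph between $W_{i}$ and $V(G_{i-1})$ (plus possibly internal edges of later blocks $W_t$ colored $i$, but we chose those colors in $\{1,2\}$, so there are none): any copy of $H$ in color $i$ would have to be a connected bipartite subgraph of $K_{s(H)-1,\,|G_{i-1}|}$, forcing one side of $H$'s bipartition into $W_{i}$; but $|W_{i}| = s(H)-1 < s(H) = s(H)$, a contradiction since every bipartition class of $H$ has at least $s(H)$ vertices in the smaller side — wait, $H$ connected bipartite has parts of sizes $s(H)\le l(H)$, and in $K_{s(H)-1,\ast}$ the $(s(H)-1)$-side cannot accommodate either part of $H$ since the smaller part already needs $s(H)$ vertices. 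Hence no monochromatic $H$ in color $i\ge 3$ either.

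The main obstacle I anticipate is purely bookkeeping: making the internal edges of each block $W_{t}$ precise so that they create no new monochromatic $H$ and no rainbow triangle, and confirming the vertex count arithmetic $R_2(H)-1+(k-2)(s(H)-1) = N$ matches the claimed bound $gr_k(K_3:H)\ge R_2(H)+(k-2)(s(H)-1)$, i.e. that the constructed graph has exactly one fewer vertex than the asserted Gallai-Ramsey number. Everything else is a routine case analysis on which block the "top" vertex of a triangle or a would-be copy of $H$ lands in. In the write-up I would present the construction first, then verify (i) and (ii) as two short claims.
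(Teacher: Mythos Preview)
Your proposal is correct and follows the same construction the paper records in Definition~\ref{Def:bipartite} (attributed there to \cite{WMSX}): start with a $2$-colored Ramsey-critical $K_{R_{2}(H)-1}$ in colors $1,2$ and successively adjoin $k-2$ blocks of $s(H)-1$ vertices, joining each new block to everything built so far by a single fresh color. The only difference is cosmetic---how the internal edges of each block $W_{t}$ are colored---and your choice (colors from $\{1,2\}$) makes the color-$t$ subgraph genuinely bipartite, which keeps the no-monochromatic-$H$ verification for colors $t\ge 3$ slightly cleaner than in the paper's stated version.
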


To prove the above lower bound, Wu et al. in \cite{WMSX} gave the following definition.

\begin{definition}\cite{WMSX}\label{Def:bipartite}
Let $k\geq2$ and $n=R_{2}(H)+(k-2)(s(H)-1)$.
For the complete graph $K_{n-1}$, choose a partition $(V_1, \ldots, V_{k-1})$ of $V(K_{n-1})$ such that $|V_1|=R_2(H)-1$ and $|V_i|=s(H)-1$ for $2\leq i\leq k-1$ and let $H_i$ be the subgraph of $K_{n-1}$ induced by $V_i$ for $i\in[k-1]$.
Define a $k$-edge-coloring $f$ of $K_{n-1}$ as follows:
(1) Color $H_1$ by colors 1 and 2 such that $H_1$ is a 2-edge-colored critical graph containing no monochromatic $H$.
(2) Color $H_i$ by $i+1$ for any $i\in \{2, \ldots, k-1\}$.
(3) For any $j\in \{2, \ldots, k-1\}$ and $i\in[j-1]$, $f(v_iv_j)=j+1$, where $v_i\in V_i$ and $v_j\in V_j$.
We denote this $k$-edge-colored $K_{n-1}$ by $B_{n-1}^{k}$.
\end{definition}

Clearly, $B_{n-1}^{k}$ in Definition~\ref{Def:bipartite} contains no rainbow triangle and no monochromatic $H$.

\begin{lemma}{\upshape \cite{M}}\label{Lem:nonbipartite}
Let $H$ be a connected non-bipartite graph and $k$ an integer such that $k\geq2$. Then
$$gr_{k}(K_3: H)\geq \begin{cases}
(R_2(H)-1)\cdot (m(H)-1)^{(k-2)/2}+1,  & \text{if $k$ is even,}\\
(\chi(H)-1)\cdot (R_2(H)-1)\cdot (m(H)-1)^{(k - 3)/2}+1,  & \text{if $k$ is odd.}
\end{cases}$$
\end{lemma}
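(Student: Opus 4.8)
The plan is to establish this lower bound by an explicit construction: for each $k$ I would build a $k$-edge-colored complete graph on exactly the claimed number of vertices, minus one, that contains neither a rainbow triangle nor a monochromatic copy of $H$; the bound on $gr_k(K_3:H)$ is then immediate. The construction is an iterated blow-up in which two fresh colors are introduced at every step, in the spirit of Definition~\ref{Def:bipartite}.

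\textbf{The single blow-up step.} First I would isolate the following claim. Suppose $G$ is a complete graph edge-colored with colors $1,\dots,j$ containing no rainbow triangle and no monochromatic $H$, and suppose $F$ is a complete graph on $m(H)-1$ vertices edge-colored with the two new colors $j+1,j+2$ and containing no monochromatic member of $\mathscr{H}$ (such an $F$ exists by the definition of $m(H)=R_2(\mathscr{H})$). Let $G^{*}$ be the blow-up of $F$ on $G$. Then $G^{*}$ is a $(j+2)$-edge-colored complete graph on $(m(H)-1)\,|G|$ vertices with no rainbow triangle and no monochromatic $H$. Rainbow-triangle-freeness is a short case analysis on how a triangle of $G^{*}$ meets the parts: inside one part the coloring is inherited from $G$; with two vertices in one part the two cross-edges get the same between-part color; with vertices in three parts only the colors $j+1,j+2$ occur. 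For monochromatic copies: a copy of $H$ in an old color $i\le j$ would lie inside a single part, contradicting the choice of $G$; a copy of $H$ in a new color, say $j+1$, cannot use two vertices of the same part (there are no color-$(j+1)$ edges inside a part), so contracting the vertices that lie in a common part and deleting repeated edges yields a merge $H'\in\mathscr{H}$ whose edges all correspond to part-pairs colored $j+1$, i.e. a monochromatic $H'$ in $F$ — a contradiction.

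\textbf{Base cases, iteration, and counting.} For even $k$ I would start from $G_2$, a $2$-edge-colored $K_{R_2(H)-1}$ with no monochromatic $H$ (colors $1,2$). For odd $k\ge 3$ I would start from $G_3$, the blow-up of the monochromatic $K_{\chi(H)-1}$ in color $3$ on $G_2$: it has $(\chi(H)-1)(R_2(H)-1)$ vertices, no rainbow triangle, no monochromatic $H$ in colors $1,2$ (such a copy would lie in one part), and no monochromatic $H$ in color $3$ because the color-$3$ graph is the complete $(\chi(H)-1)$-partite graph, whose chromatic number is $\chi(H)-1<\chi(H)$, so it has no subgraph isomorphic to $H$. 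Then, beginning from $G_2$ (even $k$) or $G_3$ (odd $k$), I would repeatedly apply the blow-up step with a fresh critical $K_{m(H)-1}$ for $\mathscr{H}$ and two new colors, doing $(k-2)/2$ steps in the even case and $(k-3)/2$ steps in the odd case, arriving at a $k$-edge-colored complete graph $G_k$ with no rainbow triangle and no monochromatic $H$. Counting vertices gives $|G_k|=(R_2(H)-1)(m(H)-1)^{(k-2)/2}$ for even $k$ and $|G_k|=(\chi(H)-1)(R_2(H)-1)(m(H)-1)^{(k-3)/2}$ for odd $k$, whence $gr_k(K_3:H)\ge |G_k|+1$, as claimed.

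\textbf{Main obstacle.} The delicate point is the merge argument inside the blow-up step: one must verify that the "same part" relation identifies only independent sets of $H$ (so the quotient is a genuine merge), that the quotient is simple after deleting repeated edges, and that every edge of the quotient really falls into the single prescribed color class of $F$. Once this is in place, the rainbow-triangle check and the chromatic-number argument for the odd base case are routine, and the availability of the critical graphs for $\mathscr{H}$ on $m(H)-1$ vertices is exactly the definition of $m(H)$.
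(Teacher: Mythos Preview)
Your construction is correct and is essentially the same iterated blow-up that the paper records in Definition~\ref{Def:nonbipartite} (citing Magnant): a $2$-colored Ramsey-critical base, repeated blow-ups on a $2$-colored $K_{m(H)-1}$ critical for $\mathscr{H}$ using two fresh colors each time, and a single monochromatic blow-up on $K_{\chi(H)-1}$ to handle the odd parity. The only cosmetic difference is that the paper performs the $K_{\chi(H)-1}$ step last (when $2i+1=k$) whereas you insert it first to form $G_3$; the merge argument, the rainbow-triangle check, and the vertex count are unaffected.
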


To prove the above lower bound, Magnant in \cite{M} gave the following definition. Recall a \emph{blow-up} of an edge-colored graph $G$ on a graph $H$ is a new graph obtained from $G$ by replacing each vertex of $G$ with $H$ and replacing each edge $e$ of $G$ with a monochromatic complete bipartite graph $(V(H), V(H))$ in the same color with $e$.

\begin{definition}\cite{M}\label{Def:nonbipartite}
Let $k\geq2$ and $$n_k=\begin{cases}
(R_2(H)-1)\cdot (m(H)-1)^{(k-2)/2}+1,  & \text{if $k$ is even,}\\
(\chi(H)-1)\cdot (R_2(H)-1)\cdot (m(H)-1)^{(k - 3)/2}+1,  & \text{if $k$ is odd.}
\end{cases}$$
Define a $k$-edge-colored $K_{n_k-1}$ by induction on $k$, which is denoted by $N_{n_{k}-1}^{k}$.
(1)  $N_{n_{2}-1}^{2}$ is a 2-edge-colored critical graph (using colors 1 and 2) with $R_{2}(H)-1$ vertices containing no monochromatic $H$.
Suppose that for any $2i<k$, $N_{n_{2i}-1}^{2i}$ has been constructed which contains no rainbow triangle and no monochromatic $H$.
(2) If $2i+2\leq k$, then let $D$ be a 2-edge-colored critical graph $K_{m(H)-1}$ (using colors $2i+1$ and $2i+2$) containing no monochromatic $H'$ in $\mathscr{H}$.
Then construct $N_{n_{2i+2}-1}^{2i+2}$ by making a blow-up of $D$ on $N_{n_{2i}-1}^{2i}$.
(3) If $2i+1=k$, then construct $N_{n_{2i+1}-1}^{2i+1}$ by making a blow-up of $K_{\chi(H)-1}$ on $N_{n_{2i}-1}^{2i}$, where $K_{\chi(H)-1}$ is colored by $2i+1$.
\end{definition}

Clearly, $N_{n_{k}-1}^{k}$ in Definition~\ref{Def:nonbipartite} contains no rainbow triangle and no monochromatic $H$.

\begin{lemma}{\upshape \cite{AGAS}}\label{Lem:graph}
Let $k$ be a positive integer. Then for any connected bipartite graph $H$,
$gr_{k}(K_3: H)\leq (R_{2}(H)-1)\cdot[(l(H)-1)k+2]\cdot(l(H)-1),$ and for any connected
non-bipartite graph $H$, $gr_{k}(K_3: H)\leq (R_{2}(H)-1)^{k\cdot(|H|-1)+1}.$
\end{lemma}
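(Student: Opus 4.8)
The plan is to prove both inequalities by the same device: iterate the Gallai partition of Lemma~\ref{Lem:G-Part} and play off the dichotomy ``many parts versus few parts''. Fix a $k$-edge-colored complete graph $G$ on $N$ vertices with no rainbow triangle, and take a Gallai partition $(V_1,\dots,V_q)$ with $q$ minimum and $|V_1|\ge\cdots\ge|V_q|$; by Lemmas~\ref{Lem:G-Part} and~\ref{Lem:qneq3} we have $q\ge 2$, $q\ne 3$, and at most two colors, say $1$ and $2$, on the edges between the parts. The crucial lifting step is that any monochromatic copy of $H$ sitting inside the (at most) two-colored complete graph on the $q$ parts produces a monochromatic $H$ in $G$: all edges between two fixed parts share a color, so choosing one vertex from each used part suffices. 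Hence if $q\ge R_2(H)$ we are done, and we may assume $q\le R_2(H)-1$, so that $|V_1|\ge N/(R_2(H)-1)$. Everything after this is the problem of descending into $V_1$ while a parameter strictly decreases.

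For bipartite $H$, write $l=l(H)$ and use $H\subseteq K_{l,l}$. Split the vertices outside $V_1$ according to whether they join $V_1$ in color $1$ or in color $2$; if either class has at least $l$ vertices then, together with $l$ vertices of $V_1$, it yields a monochromatic $K_{l,l}\supseteq H$. Otherwise fewer than $2(l-1)$ vertices lie outside $V_1$, so $V_1$ carries almost all of $G$; one recurses into $V_1$, losing only an additive $O(l)$ in the order, and argues that the effective color budget in $V_1$ has dropped (if instead all $k$ colors genuinely recurred at every level the depth would be unbounded, contradicting finiteness — concretely one shows a color appearing between parts at one level can survive only boundedly deep before the ``nearly all of $G$'' structure plus one extra vertex completes the larger side of a monochromatic $H$). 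Running this down to the base case $gr_{2}(K_{3}: H)=R_{2}(H)$ and collecting the factor $R_2(H)-1$ from the reduced-graph step together with the per-level losses is exactly what yields the coefficients in $(R_2(H)-1)\cdot[(l-1)k+2]\cdot(l-1)$.

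For non-bipartite $H$ a bipartite blow-up cannot contain $H$, so the ``color-exhaustion'' trick is weaker: a vertex joined to all of $V_1$ in color $1$ only forbids in $V_1$ a color-$1$ copy of $H$ with one vertex deleted, not all of color $1$. One therefore recurses into $V_1$ carrying, for each color, a \emph{decreasing forbidden subgraph of $H$}; since a color can be ``refreshed'' at most $|H|-1$ times before being forced to contain $H$, the recursion has depth $O(k(|H|-1))$, and as each level costs a factor $R_2(H)-1$ this gives $(R_2(H)-1)^{k(|H|-1)+1}$. The finer quantities $m(H)=R_2(\mathscr{H})$ and $\chi(H)-1$ enter only when one tracks precisely which configuration survives at even versus odd levels — mirroring the blow-up construction $N_{n_k-1}^{k}$ of Definition~\ref{Def:nonbipartite} — but the coarse estimate $|H|-1$ per color already delivers the stated exponent.

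I expect the main obstacle to be setting up an induction hypothesis strong enough to survive the descent into a single part of the Gallai partition: one must carry along, for each color, its current forbidden subgraph of $H$, verify that it strictly shrinks whenever that color appears between parts, and check at every level that either the reduced complete graph is already large enough to finish or this data strictly improves. Making the bookkeeping land on the stated closed forms — rather than on some unspecified finite or exponential bound — and correctly separating the roles of $R_2(H)$, $m(H)$ and $\chi(H)$ in the non-bipartite case is the delicate part.
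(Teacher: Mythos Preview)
This lemma is not proved in the paper at all: it is quoted verbatim from Gy\'arf\'as, S\'ark\"ozy, Seb\H{o} and Selkow \cite{AGAS} and used as a black box in the proof of Theorem~\ref{Thm:graph}. There is therefore no ``paper's own proof'' to compare your proposal against.

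As for the proposal itself, it is a reasonable outline of the strategy in \cite{AGAS} --- iterate the Gallai partition, finish if the reduced graph has at least $R_2(H)$ parts, otherwise descend into the largest part --- but it is explicitly a plan rather than a proof, and the places you flag as ``the delicate part'' are exactly where the real work lies. In particular, your bipartite argument asserts that ``the effective color budget in $V_1$ has dropped'' without saying why; nothing in your setup prevents all $k$ colors from reappearing inside $V_1$, and the parenthetical about ``contradicting finiteness'' is not an argument. The actual mechanism in \cite{AGAS} is more careful about tracking, for each color, how many more times it can appear as a Gallai-partition color before a monochromatic $H$ is forced. Similarly, in the non-bipartite case you correctly identify that one should carry a shrinking forbidden subgraph per color, but you do not verify that the subgraph strictly shrinks at each level where that color is used, nor that the base case terminates at the stated exponent. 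So the proposal has the right architecture but none of the load-bearing details; since the paper simply cites the result, filling those in would require consulting \cite{AGAS} directly.
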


\begin{theorem}\label{Thm:graph}
Let $H$ be a graph with no isolated vertex. If $H$ is bipartite and not a star, then $gr_k^{*}(K_3: H)$ is linear in $k$.
If $H$ is not bipartite, then $gr_k^{*}(K_3: H)$ is exponential in $k$.
\end{theorem}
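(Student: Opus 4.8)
The plan is to bound $gr_k^{*}(K_3:H)$ from above and below, and in each case reduce to one of the structural lower-bound constructions ($B_{n-1}^k$ in Definition~\ref{Def:bipartite} or $N_{n_k-1}^k$ in Definition~\ref{Def:nonbipartite}) together with the upper bounds of Lemma~\ref{Lem:graph}. Recall that $gr_k^{*}(K_3:H)\le gr_k(K_3:H)-1$ always, and that $gr_k^{*}(K_3:H)\ge 1$ whenever $H$ has an edge, so the content is in the \emph{order of magnitude} in $k$, not the exact constant.

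\textbf{The bipartite (non-star) case.} For the upper bound, observe that $gr_k^{*}(K_3:H)\le gr_k(K_3:H)-1$, and by Lemma~\ref{Lem:graph} the quantity $gr_k(K_3:H)$ is linear in $k$ (the bound is $(R_2(H)-1)\cdot[(l(H)-1)k+2]\cdot(l(H)-1)$, an affine function of $k$), so $gr_k^{*}(K_3:H)=O(k)$. For the matching linear lower bound, I would take the critical graph $B_{n-1}^k$ of Definition~\ref{Def:bipartite}, where $n=R_2(H)+(k-2)(s(H)-1)$, and attach a new center vertex $v$. The idea is to join $v$ to all of $V_2\cup\cdots\cup V_{k-1}$ — that is $(k-2)(s(H)-1)$ vertices — and to the entire part $V_1$ except for a carefully chosen set, coloring the edges from $v$ in the same pattern already used on a suitable existing vertex $v_i\in V_i$ for the $V_i$-blocks (color $j+1$ toward $V_j$) and in colors $1,2$ toward $V_1$ so as not to create a monochromatic $H$: because $H$ is not a star, a vertex of large monochromatic degree alone does not yield a copy of $H$, so $v$ can safely have degree linear in $k$. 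Making $v$'s attachment to $V_1$ mimic the role of some vertex that could have been added to the $2$-edge-colored critical graph $K_{R_2(H)-1}$ — i.e. one more than the star-critical threshold $r_*(H)$ — shows $d(v)$ can be pushed to roughly $(k-2)(s(H)-1)+r_*(H)$ without creating a forbidden subgraph, so $gr_k^{*}(K_3:H)\ge (k-2)(s(H)-1)+\Omega(1)$, which is linear since $s(H)\ge 2$ when $H$ is bipartite and not a star.

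\textbf{The non-bipartite case.} Here $gr_k(K_3:H)$ is already exponential in $k$ by Lemma~\ref{Lem:nonbipartite} (the lower bound contains the factor $(m(H)-1)^{\Theta(k)}$ with $m(H)\ge 3$), and bounded above by $(R_2(H)-1)^{k(|H|-1)+1}$ via Lemma~\ref{Lem:graph}; so the upper bound $gr_k^{*}(K_3:H)\le gr_k(K_3:H)-1$ is exponential in $k$. For the exponential \emph{lower} bound on the star-critical number, I would start from the critical graph $N_{n_k-1}^k$ of Definition~\ref{Def:nonbipartite}, which is an iterated blow-up, and add a new center vertex $v$ joined to a large subset of the vertices. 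The key point is that in the last blow-up step a single vertex of the ``base'' graph $D$ (a $2$-edge-colored $K_{m(H)-1}$) gets replaced by a copy of $N_{n_{k-2}-1}^{k-2}$; attaching $v$ so that it plays the role of being adjacent, in colors $k-1$ and $k-2$ respectively, to the vertex-sets corresponding to a vertex of $D$ of large degree, lets $v$ reach a constant fraction of one blown-up part, which already has $\Theta\big((m(H)-1)^{(k-2)/2}\big)$ vertices. One checks no rainbow triangle is created (the edges from $v$ use only two colors in a consistent ``Gallai'' pattern) and no monochromatic $H$ appears (the monochromatic components containing $v$ are, by construction, blow-ups that avoid members of $\mathscr H$, hence avoid $H$). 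This gives $gr_k^{*}(K_3:H)=\Omega\big((m(H)-1)^{(k-2)/2}\big)$, exponential in $k$.

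\textbf{Main obstacle.} The routine half is the upper bounds — they follow immediately from $gr_k^{*}\le gr_k-1$ and Lemma~\ref{Lem:graph}. The delicate part is the lower bounds: one must specify exactly how the new center vertex $v$ is attached to the existing critical graph and how its incident edges are colored, then verify \emph{both} that no rainbow triangle is introduced and that no monochromatic copy of $H$ is created. The rainbow-triangle check is where care is needed, since $v$ is adjacent to vertices lying in many different blocks of the Gallai/blow-up structure, and its two available colors must be assigned block-by-block consistently with the two colors already appearing between those blocks; getting $d(v)$ as large as possible (to match the claimed linear, resp. exponential, growth) while respecting this constraint is the crux. For the non-star hypothesis in the bipartite case, the essential use is that a high-monochromatic-degree vertex does not by itself contain $H$, so $v$ may have unbounded degree in a single color — this is exactly what fails for $H=K_{1,m}$ and is why the star case is excluded.
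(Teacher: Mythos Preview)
Your approach is essentially the same as the paper's: upper bounds come immediately from $gr_k^{*}\le gr_k-1$ combined with Lemma~\ref{Lem:graph}, and lower bounds come from attaching a high-degree center vertex $v$ to the known critical constructions $B_{n-1}^k$ and $N_{n_k-1}^k$.

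In the bipartite case your sketch matches the paper almost verbatim: the paper joins $v$ to all of $V_2\cup\cdots\cup V_{k-1}$ with color $j+1$ toward $V_j$, and to $r_*(H)-1$ vertices of $V_1$ using a $2$-colored critical extension $H_1\sqcup K_{1,r_*(H)-1}$, obtaining exactly the bound $gr_k^{*}(K_3:H)\ge r_*(H)+(k-2)(s(H)-1)$ you anticipate.

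In the non-bipartite case your description is looser than the paper's and slightly garbled. The paper does \emph{not} attach $v$ to ``a constant fraction of one blown-up part''; rather it attaches $v$ to several \emph{entire} blown-up copies, letting $v$ play the role of a \emph{new} vertex $v'$ adjoined to the top-level reduced graph. Concretely, for even $k$ it uses a critical $2$-coloring of $D\sqcup K_{1,r_*(\mathscr{H})-1}$ (in colors $k-1$ and $k$, not $k-1$ and $k-2$) and joins $v$ to the $r_*(\mathscr{H})-1$ copies corresponding to the neighbors of $v'$; for odd $k$ it joins $v$ to $\chi(H)-2$ of the $\chi(H)-1$ copies using the single top color. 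The verification that no monochromatic $H$ appears then reduces cleanly to the fact that the reduced graph $D\sqcup K_{1,r_*(\mathscr{H})-1}$ (resp.\ $K_{\chi(H)-1}$ plus a pendant of degree $\chi(H)-2$) contains no monochromatic member of $\mathscr{H}$. Your ``mimic an existing vertex of $D$ of large degree'' idea would instead require checking that adding a \emph{twin} to the critical $D$ creates no monochromatic merge of $H$, which is not automatic; routing through $r_*(\mathscr{H})$ as the paper does sidesteps this.
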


\begin{proof}
First, suppose that $H$ is a bipartite graph but not a star.
Now we prove that $gr_k^{*}(K_3: H)\geq r_{*}(H)+(k-2)(s(H)-1)$ which is linear in $k$.
Let $t=r_{*}(H)-1+(k-2)(s(H)-1)$ and $B_{n-1}^{k}$ be the graph defined in Definition~\ref{Def:bipartite}, where $n=R_{2}(H)+(k-2)(s(H)-1)$.
Then $H_1$ is the subgraph of $B_{n-1}^{k}$ such that $|H_1|=R_2(H)-1$ and $H_1$ is a 2-edge-colored critical graph containing no monochromatic $H$.
Hence, we can find a 2-edge-colored critical graph $H_{1}\sqcup K_{1, r_{*}(H)-1}$ (using colors 1 and 2) containing no monochromatic $H$.
Denote this 2-edge-coloring by $h$ and the center vertex of $K_{1, r_{*}(H)-1}$ by $v$.
Let $\{x_1, \ldots, x_{r_{*}(H)-1}\}\subseteq V_1$ and $vx_i\in E(H_{1}\sqcup K_{1, r_{*}(H)-1})$ for any $1\leq i\leq r_{*}(H)-1$.
We construct $B_{n-1}^{k}\sqcup K_{1, t}$ by adding the edge set $E^{*}=\bigcup\limits_{i=2}^{k-1}\{vu~|~u\in V_i\}\cup \{vx_1, \ldots, vx_{r_{*}(H)-1}\}$ to $B_{n-1}^{k}$, where $V_2$, $\ldots$, $V_{k-1}$ are the parts of $V(B_{n-1}^{k})$ in Definition~\ref{Def:bipartite}.
Let $g$ be a $k$-edge-coloring of $B_{n-1}^{k}\sqcup K_{1, t}$ such that
$$g(e)= \begin{cases}
f(e),  & \text{if $e\in E(B_{n-1}^{k})$, where $f$ is defined in Definition~\ref{Def:bipartite}, }\\
h(e),  & \text{if $e\in E(v, V_1)$,}\\
i+1,  & \text{if $e\in E(v, V_i)$ for any $i\in \{2, \ldots, k-1\}$.}\\
\end{cases}
$$
Thus, $d(v)=r_{*}(H)-1+(k-2)(s(H)-1)$ and the graph $B_{n-1}^{k}\sqcup K_{1, t}$ colored by $g$ contains neither a rainbow triangle nor a monochromatic $H$.
Hence, $gr_{k}^{*}(K_{3}: H)\geq r_{*}(H)+(k-2)(s(H)-1)$.
For the upper bound, by Lemma~\ref{Lem:graph}, we know that $gr_{k}(K_3: H)\leq (R_2(H)-1)\cdot [(l(H)-1)k+2]\cdot(l(H)-1)$. Clearly, $gr_{k}^{*}(K_{3}: H)\leq (R_2(H)-1)\cdot [(l(H)-1)k+2]\cdot(l(H)-1)-1$.
Thus, for bipartite graph $H$, we know that there exists a lower bound and an upper bound of $gr^{*}_{k}(K_3: H)$ linear in $k$.
It follows that the statement in Theorem~\ref{Thm:graph} holds when $H$ is bipartite.

Now, suppose that $H$ is a connected non-bipartite graph.
Then we give a lower bound on $gr_k^{*}(K_3: H)$ which is exponential in $k$ in the case that $k\geq3$.
Let
$$r_k=\begin{cases}
r_{*}(H)-1,  & \text{if $k=2$,}\\
(r_{*}(\mathscr{H})-1)\cdot(R_2(H)-1)\cdot (m(H)-1)^{(k-4)/2},  & \text{if $k\geq4$ is even,}\\
(\chi(H)-2)\cdot (R_2(H)-1)\cdot (m(H)-1)^{(k - 3)/2},  & \text{if $k$ is odd,}
\end{cases}$$
and $N_{n_{k}-1}^{k}$ be the graph defined in Definition~\ref{Def:nonbipartite}, where $m(H)=R_{2}(\mathscr{H})$ and
$$n_k=\begin{cases}
(R_2(H)-1)\cdot (m(H)-1)^{(k-2)/2}+1,  & \text{if $k$ is even,}\\
(\chi(H)-1)\cdot (R_2(H)-1)\cdot (m(H)-1)^{(k - 3)/2}+1,  & \text{if $k$ is odd.}
\end{cases}$$
Now we construct a $k$-edge-colored $N_{n_{k}-1}^{k}\sqcup K_{1, r_{k}}$ containing neither a rainbow triangle nor a monochromatic $H$ by induction on $k$.
Let $v$ be the center vertex of $K_{1, r_k}$.
When $k=2$, we have that $n_2=R_{2}(H)$ and $r_2=r_{*}(H)-1$. Then there exists a 2-edge-colored critical graph $N_{n_{2}-1}^{2}\sqcup K_{1, r_2}$ containing no monochromatic $H$.
Suppose that $k\geq 3$ and for any $2i<k$, we have constructed $N_{n_{2i}-1}^{2i}\sqcup K_{1, r_{2i}}$ which contains neither a rainbow triangle nor a monochromatic $H$.
When $2i+2\leq k$, $N_{n_{2i+2}-1}^{2i+2}$ is a blow-up of $D$ on $N_{n_{2i}-1}^{2i}$ by Definition~\ref{Def:nonbipartite}.
Denote each copy of $N_{n_{2i}-1}^{2i}$ in $N_{n_{2i+2}-1}^{2i+2}$ by $G_1$, $\ldots$, $G_{m(H)-1}$.
Let $v_j\in V(G_j)$ for each $1\leq j\leq m(H)-1$.
Then the subgraph induced by $\{v_1, \ldots, v_{m(H)-1}\}$ is isomorphic to $D$.
Choose a critical 2-edge-coloring $g$ of $D\sqcup K_{1, r_{*}(\mathscr{H})-1}$ (using $2i+1$ and $2i+2$) such that $D\sqcup K_{1, r_{*}(\mathscr{H})-1}$ contains no monochromatic graph $H^{'}$ in $\mathscr{H}$.
Let $v^{'}$ be the center vertex of $K_{1, r_{*}(\mathscr{H})-1}$.
Now we construct $N_{n_{2i+2}-1}^{2i+2}\sqcup K_{1, r_{2i+2}}$ according to the graph $D\sqcup K_{1, r_{*}(\mathscr{H})-1}$.
For any $v_j\in V(D)$ with $1\leq j\leq m(H)-1$, if $v^{'}v_j\in E(D\sqcup K_{1, r_{*}(\mathscr{H})-1})$, then we add the edge set $\{vu~|~u\in V(G_j)\}$.
The resulting graph is $N_{n_{2i+2}-1}^{2i+2}\sqcup K_{1, r_{2i+2}}$.
Thus, $r_{2i+2}=(r_{*}(\mathscr{H})-1)\cdot(n_{2i}-1)$.
Let $c$ be a $(2i+2)$-edge-coloring of $N_{n_{2i+2}-1}^{2i+2}\sqcup K_{1, r_{2i+2}}$ such that
$$c(e)=\begin{cases}
f(e),  & \text{if $e\in E(N_{n_{2i+2}-1}^{2i+2})$, where $f$ is defined in Definition~\ref{Def:nonbipartite},}\\
g(v^{'}v_j),  & \text{if $e\in E(v, V(G_j))$ for $1\leq j\leq m(H)-1$.}
\end{cases}$$
Clearly, this $(2i+2)$-edge-colored $N_{n_{2i+2}-1}^{2i+2}\sqcup K_{1, r_{2i+2}}$ by $c$ contains no rainbow triangle, where  $d(v)=r_{2i+2}=(r_{*}(\mathscr{H})-1)\cdot(R_2(H)-1)\cdot (m(H)-1)^{(2i-2)/2}$.
Suppose, to the contrary, that $N_{n_{2i+2}-1}^{2i+2}\sqcup K_{1, r_{2i+2}}$ contains a monochromatic $H$.
Then the monochromatic $H$ is colored by $2i+1$ or $2i+2$.
Hence for each copy of $N_{n_{2i}-1}^{2i}$, $V(H)\cap V(N_{n_{2i}-1}^{2i})$ is an independent set of $H$.
It follows that $D\sqcup K_{1, r_{*}(\mathscr{H})-1}$ contains a monochromatic merge graph of $H$, a contradiction.
When $2i+1=k$, $N_{n_{2i+1}-1}^{2i+1}$ is a blow-up of $K_{\chi(H)-1}$ on $N_{n_{2i}-1}^{2i}$ by Definition~\ref{Def:nonbipartite}.
Denote each copy of $N_{n_{2i}-1}^{2i}$ in $N_{n_{2i+1}-1}^{2i+1}$ by $G_1$, $\ldots$, $G_{\chi(H)-1}$.
We construct $N_{n_{2i+1}-1}^{2i+1}\sqcup K_{1, r_{2i+1}}$ by adding the edge set $\{vu~|~u\in \bigcup_{j=1}^{\chi(H)-2}V(G_j)\}$ to $N_{n_{2i+1}-1}^{2i+1}$.
Thus, $r_{2i+1}=(\chi(H)-2)\cdot(n_{2i}-1)$.
Let $c$ be a $(2i+1)$-edge-coloring of $N_{n_{2i+1}-1}^{2i+1}\sqcup K_{1, r_{2i+1}}$ such that
$$c(e)=\begin{cases}
f(e),  & \text{if $e\in E(N_{n_{2i+1}-1}^{2i+1})$, where $f$ is defined in Definition~\ref{Def:nonbipartite},}\\
2i+1,  & \text{if $e\in E(v, V(G_j))$ for $1\leq j\leq \chi(H)-2$.}
\end{cases}$$
Clearly, this is a $(2i+1)$-edge-colored $N_{n_{2i+1}-1}^{2i+1}\sqcup K_{1, r_{2i+1}}$ containing neither a rainbow triangle nor a monochromatic $H$, where $d(v)=r_{2i+1}=(\chi(H)-2)\cdot (R_2(H)-1)\cdot (m(H)-1)^{(2i - 2)/2}$.
Hence, we have that
$gr_{k}^{*}(K_3: H)\geq r_{k}+1
$, where $r_{k}+1$ is exponential in $k$ for $k\geq 3$.
For the upper bound, by Lemma~\ref{Lem:graph}, we know that $gr_{k}(K_3: H)\leq (R_2(H)-1)^{k(|H|-1)+1}$. Clearly, $gr_{k}^{*}(K_{3}: H)\leq (R_2(H)-1)^{k(|H|-1)+1}-1$. Thus, for non-bipartite graph $H$, there is a lower bound and an upper bound exponential in $k$.
Complete the proof of Theorem~\ref{Thm:graph}.
\end{proof}

\bibliography{bibfile1}

\end{document}